\theoremstyle{plain}
\newtheorem*{conj*}{Conjecture}
\newtheorem*{cor*}{Corollary}
\newtheorem{theorem}{Theorem}[section]
\newtheorem{proposition}[theorem]{Proposition}
\newtheorem{lemma}[theorem]{Lemma}
\newtheorem{question}{Question}
\theoremstyle{definition}
\newtheorem*{def*}{Definition}
\newtheorem{remark}[theorem]{Remark}
\newtheorem{definition}[theorem]{Definition}
\newcommand{\C}{\mathcal C}
\newcommand{\SC}{{\mathcal C}}
\newcommand{\SO}{{\mathcal O}}
\newcommand{\SP}{{\mathcal P}}
\newcommand{\ga} {\gamma}
\newcommand{\Z}{\mathbb{Z}}
\newcommand{\N}{\mathbb{N}}
\newcommand{\R}{\mathbb{R}}
\newcommand{\eps}{\varepsilon}
\newcommand{\dist}{\operatorname{\textit{d}}}
\newcommand{\diam}{\operatorname{diam}}
\DeclareMathOperator{\Per}{Per}
\author{Bernardo Carvalho*}
\author{Piotr Oprocha}
\author{Elias Rego}
\title[Continuum-wise hyperbolicity]{Continuum-wise hyperbolicity, periodic shadowing, and measures of maximal entropy}
\begin{document}

\renewcommand{\thefootnote}{}

\footnote{2020 \emph{Mathematics Subject Classification}: Primary 37D20; Secondary 37C05.}

\footnote{\emph{Key words and phrases}: Periodic shadowing, periodic specification, cw-hyperbolicity, measures of maximal entropy.}

\footnote{\emph{* Corresponding Author}}

\renewcommand{\thefootnote}{\arabic{footnote}}
\setcounter{footnote}{0}

\begin{abstract}
We prove that cw-hyperbolic homeomorphisms with jointly continuous stable/unstable holonomies satisfy the periodic shadowing property and, if they are topologically mixing, the periodic specification property. We discuss difficulties to adapt Bowen's techniques to obtain a measure of maximal entropy for cw-hyperbolic homeomorphisms, exhibit the unique measure of maximal entropy for Walter's pseudo-Anosov diffeomorphism of $\mathbb{S}^2$, and prove it can be obtained, as in the expansive case, as the weak* limit of an average of Dirac measures on periodic orbits. As an application, we exhibit the unique measure of maximal entropy for the homeomorphism on the Sierpi\'nski Carpet defined in \cite{BO}, which does not satisfy the specification property.
\end{abstract}

\maketitle

\section{Introduction}

Hyperbolicity is a central notion in the study of chaotic dynamical systems, which originated from  works of Anosov \cite{A} and Smale \cite{S} as a source of chaotic dynamics (on compact spaces). Hyperbolic systems have reasonably well-understood dynamics from both topological and statistical aspects. Among many important results is the Spectral Decomposition Theorem \cite{S} which decomposes the non-wandering set as a finite number of disjoint elementary sets that are compact and periodic sets satisfying expansiveness, shadowing, and specification properties. One important feature of hyperbolic elementary sets is the existence of the unique measure of maximal entropy, which was proved independently by Bowen \cite{B} and Margulis \cite{M} using distinct techniques. The nature of Bowen's proof is more topological, to some extent independent of direct estimates of rate of expansions provided by hyperbolicity. In fact, Bowen's proof enables construction of a measure of maximal entropy for expansive homeomorphisms on compact metric spaces, satisfying the periodic specification property (and the measure is unique in that case). These assumptions are satisfied, in particular, for (mixing) topologically hyperbolic homeomorphisms, that are homeomorphisms which are expansive and satisfy the shadowing property (see the monograph Aoki and Hiraide \cite{AH} for more information on topological hyperbolicity).

Since the ``Beyond Hyperbolicity'' article of Shub and Smale \cite{SS}, many distinct generalizations of hyperbolicity were considered and many results regarding existence of a measure of maximal entropy for these notions were obtained; for partially hyperbolic diffeomorphisms see for example \cite{BFSV,CFT,CFT2,CPZ,HHTU,KT,RT,U,UVY}, and
for expansive systems beyond the specification property see \cite{CT,CT2}. On the other hand, examples of systems without measures of maximal entropy were discussed in \cite{Bu} and \cite{M}. In the case of non-expansive systems, when only shadowing is assumed, still some strong results about entropy can be obtained \cite{MO,LO}, however distinguishing between cases with or without measure of maximal entropy is a challenging problem in general. It is possible to find ergodic measures with entropy approaching any number $c<h(f)$ and converging in weak* topology, but  unfortunately, this does not transfer onto existence of measures of maximal entropy, neither is sufficient to force existence of periodic points. Therefore it seems that some additional properties are needed to ensure existence of such objects, yet weaker than expansivity to go beyond the class of hyperbolic maps.

Recently, a program \emph{beyond topological hyperbolicity} was initiated \cite{ACCV} considering many generalizations of expansiveness and shadowing properties in the literature. In particular, continuum-wise expansiveness was introduced by Kato in \cite{K1} and \cite{K2}, and the study of cw-expansive homeomorphisms satisfying the shadowing property is proving to be a fruitful research direction, containing distinct levels of generalized expansiveness such as N-expansiveness \cite{CC,CC2}, countably expansiveness
\cite{ACCV,ACCV2}, and the more general cw-expansiveness \cite{AAV,ACS,ACCV3,CR}. These systems seem to be a good direction towards reaching properties such as existence of measures of maximal entropy.
An essential complication that arises in this approach is that cw-expansive homeomorphisms satisfying the shadowing property do not mimic well-known results (and very useful) for the hyperbolic dynamics, such as the Spectral Decomposition Theorem
(see \cite{CC} for an example with an infinite number of distinct chain-recurrent classes). This problem seems to be avoided with the introduction of continuum-wise hyperbolicity by Artigue, Carvalho, Cordeiro and Vieitez in \cite{ACCV3}: 
%which we now define.
 
\begin{definition}[cw-hyperbolicity]\label{cwhyp}
A homeomorphism $f\colon X\to X$ of a compact metric space $(X,d)$ is \emph{continuum-wise expansive} if there exists $c>0$ such that $W^u_c(x)\cap W^s_c(x)$ is totally disconnected for every $x\in X$, where $W^s_c(x)$ and $W^u_c(x)$ denote the classical $c$-stable/unstable sets of $x$. The number $c>0$ is called a cw-expansive constant of $f$. We say that $f$ satisfies the \emph{cw-local-product-structure} if for each $\eps>0$ there exists $\delta>0$ such that $$C^s_\eps(x)\cap C^u_\eps(y)\neq\emptyset \,\,\,\,\,\, \text{whenever} \,\,\,\,\,\, d(x,y)<\delta,$$ where $C^s_\eps(x)$ and $C^u_\eps(x)$ denote the connected component of $x$ on the sets $W^s_\eps(x)$ and $W^u_\eps(x)$, respectively. The cw-expansive homeomorphisms satisfying the cw-local-product-structure are called \emph{continuum-wise hyperbolic}.
\end{definition}

Beyond hyperbolic case, the sets $W^s_\eps(x)$ and $C^s_\eps(x)$ (similarly $W^u_\eps(x)$ and $C^u_\eps(x)$) can differ a lot, however in the case of cw-hyperbolic homeomorphisms it is still possible to recover some properties. While, on surfaces, $C^s_\eps(x)$ is an arc (assuming cw$_F$-hyperbolicity \cite{ACS}), $W^s_\eps(x)$ can be non-locally-connected and contain a Cantor set of distinct arcs. However, a kind of hyperbolic-like behavior is present inside local stable/unstable continua, which can be obtained for example by considering a cw-metric as in \cite{ACCV3} (see also Theorem \ref{teoCwHyp}). For expansive homeomorphisms, the local-product-structure is defined assuming an intersection between $W^s_\eps(x)$ and $W^u_\eps(y)$ and it is an important tool to prove shadowing, periodic shadowing, and periodic specification (assuming mixing) properties, and in this case sets $W^s_\eps(x)$, $W^u_\eps(y)$ are continua (see the proof of Lemma 1.3 in \cite{Hi}). 
By analogy, the cw-local-product-structure is defined assuming an intersection between $C^s_\eps(x)$ and $C^u_\eps(y)$, which is sufficient to ensure that cw-hyperbolicity implies the shadowing property (see \cite{ACCV3}*{Theorem 2.5}). Unfortunately, the cw-local-product-structure is a much weaker tool than the standard local product structure existing in hyperbolic dynamics and similarities exist mainly only on the conceptual level.

The question about density of periodic points in the non-wandering set for cw-hyperbolic homeomorphisms was discussed in \cite{CR}. It is solved in \cite{CR}*{Theorem 1.3} with the additional hypothesis of jointly continuous stable/unstable holonomies (see Definition \ref{cw-cont}). 
%This hypothesis is proved in \cite{CR}*{Theorem 1.4} in the case of cw$_F$-hyperbolic surface homeomorphisms (assuming only regular byasymptotic sectors or simply a finite number of spines).

In this article, we prove (a stronger result) that cw-hyperbolic homeomorphisms with jointly continuous stable/unstable holonomies satisfy the periodic shadowing property and consequently, if they are topologically mixing, the periodic specification property (this is done in Section 2). It ensures proper construction of periodic points, allowing us to precise control of their orbits. Among other important consequences, it ensures that ergodic measures supported on periodic orbits are dense in the simplex of invariant measures.

Then we turn to the problem of the existence of measure of maximal entropy, with main difficulty coming from the lack of expansivity.
We discuss difficulties to adapt Bowen's techniques to obtain a measure of maximal entropy for cw-hyperbolic homeomorphisms, exhibit the measure of maximal entropy for Walter's pseudo-Anosov diffeomorphism of $\mathbb{S}^2$, and prove it can be obtained, as in the expansive case, as the weak* limit of an average of Dirac measures on periodic orbits (this is done in Section 3). This result is used in Section 4 where we exhibit the measure of maximal entropy for a homeomorphism on the Sierpi\'nski Carpet defined in \cite{BO}. This example is topologically mixing, has a dense set of periodic points, but does not satisfy the specification property, so Bowen's techniques also do not directly apply. The last section contains further remarks and open questions.

\section{Periodic shadowing and periodic specification}

In this section, we prove periodic shadowing and periodic specification for cw-hyperbolic homeomorphisms with jointly continuous stable/unstable holonomies. We begin with all necessary definitions.

\begin{definition}[Shadowing]
We say that a homeomorphism $f:X\rightarrow X$ satisfies the \emph{shadowing property} if given any $\varepsilon>0$ there is $\delta>0$ such that for each sequence $(x_n)_{n\in\mathbb{Z}}\subset X$ satisfying
$$d(f(x_n),x_{n+1})<\delta \,\,\,\,\,\, \text{for every} \,\,\,\,\,\, n\in\mathbb{Z}$$ there is $y\in X$ such that
$$d(f^n(y),x_n)<\varepsilon \,\,\,\,\,\, \text{for every} \,\,\,\,\,\, n\in\mathbb{Z}.$$
In this case, we say that $(x_n)_{n\in\mathbb{Z}}$ is a $\delta-$pseudo orbit of $f$ and that $(x_n)_{n\in\mathbb{Z}}$ is
$\varepsilon-$shadowed by $y$.
\end{definition}

For more information about shadowing we suggest the monographs of Palmer \cite{Pa} and Pilyugin \cite{Pi}. An alternative (classical) way to define shadowing, as in the above definition, is to shadow finite $\delta$-pseudo orbits $(x_n)_{n=0}^{k}$ independent of their \emph{length} $k$. By compactness of $X$, these approaches are equivalent.
The periodic shadowing property, that we now define, shadows finite pseudo-orbits that are periodic with a periodic point. 

\begin{definition}\label{periodicshadowing}[Periodic shadowing]
We say that a homeomorphism $f:X\rightarrow X$ satisfies the \emph{periodic shadowing property} if given $\varepsilon>0$ there is $\delta>0$ such that for each $\delta$-pseudo-orbit $(x_n)_{n=0}^{k}\subset X$ satisfying $x_k=x_0$, there is a periodic point $y\in X$ satisfying $f^{km}(y)=y$ for some $m\geq 1$ that $\eps$-shadows %$(x_n)_{n=0}^{k}$ and satisfies $f^k(y)=y$. 
$(z_n)_{n=0}^{km}$ defined by $z_{jk+i}=x_i$ for every $i\in\{0,\ldots,k\}$ and $j\in\{0,\ldots,m-1\}$.
We call $(x_n)_{n=0}^{k}$ a periodic $\delta-$pseudo orbit of $f$. 
\end{definition}

It is known that adding machines are minimal homeomorphisms on Cantor spaces satisfying the shadowing property (see \cite{MY} and \cite{MO}). Thus, they are examples with the shadowing property but without the periodic shadowing property. Understanding the relation between shadowing and periodic shadowing is still an open problem even on low-dimensional spaces. It is proved in \cite{KP} that surface homeomorphisms with the shadowing property have periodic points in each $\eps$-transitive class (this does not necessarily imply density of periodic points in non-wandering set nor the periodic shadowing property). In the case of cw-hyperbolic homeomorphisms, there can even exist a Cantor set of distinct points shadowing a given pseudo-orbit \cite{PV}, and it may also happen that a periodic pseudo-orbit is shadowed by non-periodic points. 

The main result of this section is Theorem \ref{thm:tracing} where we prove that cw-hyperbolic homeomorphisms with jointly continuous stable/unstable holonomies satisfy the periodic shadowing property. Before stating and proving this result, we discuss distinct regularities for stable/unstable holonomies of cw-hyperbolic homeomorphisms, as is done in \cite{CR}.

\begin{definition}[Local stable and local unstable holonomies]\label{holonomies}
Let $c>0$ be a cw-expansive constant of $f$, $\eps=\frac{c}{2}$, and choose $\delta'\in(0,\eps)$, given by the $cw$-local-product-structure, such that 
$$C^u_\eps(x)\cap C^s_\eps(y)\neq\emptyset \,\,\,\,\,\, \text{whenever} \,\,\,\,\,\, d(x,y)<\delta'.$$ 
Thus, if $\delta=\frac{\delta'}{2}$, then
for each pair $(x,y)\in X\times X$, with $d(x,y)<\delta$, we can define $\pi^s_{x,y}\colon C^s_{\delta}(x)\to \mathcal{K}(C^s_{\eps}(y))$ by 
$$\pi^s_{x,y}(z)=C_{\eps}^u(z)\cap C^s_{\eps}(y),$$ where $\mathcal{K}(C^s_{\eps}(y))$ denotes the set of all compact subsets of $C^s_{\eps}(y)$. Note that $\pi^s_{x,y}(z)\neq\emptyset$ since $z\in C^s_{\delta}(x)$ implies 
$$d(z,y)\leq d(z,x)+d(x,y)\leq \delta+\delta=\delta',$$ which, in turn, implies
$$C^u_\eps(z)\cap C^s_\eps(y)\neq\emptyset.$$ The unstable holonomy map $\pi^u_{x,y}\colon C^u_{\delta}(x)\to \mathcal{K}(C^u_{\eps}(y))$ is defined similarly by
$$\pi^u_{x,y}(z)=C_{\eps}^s(z)\cap C^u_{\eps}(y).$$
\end{definition}

These holonomies were described using only the cw-local-product-structure. To discuss their regularities, we need to use the cw-metric. Let $\mathcal{C}(X)$ denote the space of all subcontinua of $X$ and define
$$E=\{(p,q,C): C\in \C(X),\, p,q\in C\}.$$
For $p,q\in C$ denote $C_{(p,q)}=(p,q,C)$.
The notation $C_{(p,q)}$ implies that $p,q\in C$ and that $C\in\C(X)$.
Define
$$f(C_{(p,q)})=f(C)_{(f(p),f(q))}.$$
and consider the sets
\[
\C^s_\eps(X)=\{C\in\C(X);\diam(f^n(C))\leq\eps\, \text{ for every }\, n\geq 0\} \,\,\,\,\,\, \text{and}
\]
\[
\C^u_\eps(X)=\{C\in\C(X);\diam(f^{-n}(C))\leq\eps\, \text{ for every }\, n\geq 0\},
\]
where $\diam(A)$ denotes the diameter of the set $A$. These sets contain exactly the $\eps$-stable and $\eps$-unstable continua of $f$, respectively.

\begin{theorem}[Hyperbolic $cw$-metric-\cite{ACCV3}]
\label{teoCwHyp}
If $f\colon X\to X$ is a cw-expansive homeomorphism of a compact metric space $X$, then there is a function $D\colon E\to\R$ satisfying the following conditions.
\begin{enumerate}
\item Metric properties:
\vspace{+0.2cm}
\begin{enumerate}
 \item $D(C_{(p,q)})\geq 0$ with equality if, and only if, $C$ is a singleton,\vspace{+0.1cm}
 \item $D(C_{(p,q)})=D(C_{(q,p)})$,\vspace{+0.1cm}
 \item $D([A\cup B]_{(a,c)})\leq D(A_{(a,b)})+D(B_{(b,c)})$, $a\in A, b\in A\cap B, c\in B$.
\end{enumerate}
\vspace{+0.2cm}
\item Hyperbolicity: there exist constants $\lambda\in(0,1)$ and $\varepsilon>0$ satisfying
\vspace{+0.2cm}
	\begin{enumerate}
	\item if $C\in\C^s_\eps(X)$ then $D(f^n(C_{(p,q)}))\leq 4\lambda^nD(C_{(p,q)})$ for every $n\geq 0$,\vspace{+0.1cm}
	\item if $C\in\C^u_\eps(X)$ then $D(f^{-n}(C_{(p,q)}))\leq 4\lambda^nD(C_{(p,q)})$ for every $n\geq 0$.
	\end{enumerate}
\vspace{+0.2cm}
\item Compatibility: for each $\delta>0$ there is $\gamma>0$ such that
\vspace{+0.2cm}
\begin{enumerate}
\item if $\diam(C)<\gamma$, then $D(C_{(p,q)})<\delta$\,\, for every $p,q\in C$,\vspace{+0.1cm}
\item if there exist $p,q\in C$ such that $D(C_{(p,q)})<\gamma$, then $\diam(C)<\delta$.
\end{enumerate}
\end{enumerate}
\end{theorem}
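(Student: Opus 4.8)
The plan is to construct $D$ as a dynamically adapted \emph{size} of a pointed continuum, in the spirit of the adapted (Lyapunov) metrics used for expansive homeomorphisms, with the distance between two nearby points replaced by the size of a continuum joining them. I would build a stable functional that contracts under forward iteration and an unstable functional that contracts under backward iteration, and assemble $D$ from them. The metric axioms in (1) should be forced by defining these functionals through an \emph{intrinsic} size: for instance, an infimum, over finite chains $p=x_0,\dots,x_m=q$ of points of $C$, of the sum of (adapted) diameters of subcontinua of $C$ realizing the consecutive steps. With such a definition, positivity and vanishing-iff-singleton (1a) and symmetry (1b) are immediate, and the concatenation estimate (1c) is precisely the triangle inequality for chain-infima once $A\cup B$ is traversed through the common point $b\in A\cap B$.

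The qualitative engine of the proof, and in my view the cleanest step, is to turn cw-expansiveness into a uniform contraction of stable continua. First reformulate the hypothesis: if $\eps\le c$, then no nondegenerate continuum $K$ can satisfy $\diam(f^m(K))\le\eps$ for all $m\in\Z$, since for $x\in K$ this would force $K\subseteq W^s_\eps(x)\cap W^u_\eps(x)\subseteq W^s_c(x)\cap W^u_c(x)$, which is totally disconnected. I then claim the uniform statement
\[
a_n:=\sup\{\diam(f^n(C)):C\in\C^s_\eps(X)\}\longrightarrow 0\quad\text{as }n\to+\infty.
\]
If this failed, there would be $\beta>0$, indices $n_k\to\infty$, and stable continua $C_k$ with $\diam(f^{n_k}(C_k))\ge\beta$; writing $K_k=f^{n_k}(C_k)$, each $K_k$ is stable and, because $C_k$ is stable, also satisfies $\diam(f^{-j}(K_k))\le\eps$ for $0\le j\le n_k$. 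By compactness of the hyperspace $\C(X)$ in the Hausdorff metric (on which $f$ and $\diam$ act continuously), a subsequence of $K_k$ converges to a nondegenerate continuum $K$ with $\diam(f^m(K))\le\eps$ for every $m\in\Z$, contradicting the reformulation. The symmetric statement for $\C^u_\eps(X)$ under backward iteration follows in the same way.

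The main obstacle is to promote this uniform, but a priori slow, attraction into a genuinely \emph{exponential} contraction realized by an adapted size, while keeping $D$ real-valued, topologically compatible with the diameter (condition (3)), and additive along chains (condition (1c)). Two difficulties intertwine. First, near singular behaviour---such as at the singularities of Walter's pseudo-Anosov map---the contraction rate measured in the ambient metric need not be scale-uniform, so a naive weighted sum $\sum_n \la^{-n}\diam(f^n(\cdot))$ with a fixed rate diverges; the remedy is a metrization of Meyers' type, which treats $f$ as a self-map of the compact hyperspace $\C^s_\eps(X)$ with the singletons as a uniformly attracting invariant set and produces a Lyapunov gauge $\rho^s$ and a rate $\la\in(0,1)$ with $\rho^s(f(C))\le\la\,\rho^s(C)$, nonlinearly reparametrizing sizes so that slow attraction becomes exponential. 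This is consistent with (3), which asks only that $D$ and $\diam$ induce the same notion of smallness, not a bi-Lipschitz bound. Second, and this is where I expect the real work to lie, one must assemble from $\rho^s$ and its backward counterpart $\rho^u$ a single pointed size $D$ that contracts forward on $\C^s_\eps(X)$ and backward on $\C^u_\eps(X)$ simultaneously---the continuum analogue of an adapted norm on a hyperbolic splitting---despite the absence of any canonical decomposition of a continuum into stable and unstable parts. I would transport $\rho^s,\rho^u$ to pointed continua through the chain infimum and verify the Hyperbolicity estimate on each class separately, where only the corresponding gauge is active; the constant $4$ should absorb the crude pre-factor coming from the metrization together with the bounded contribution of the inactive gauge, (2b) being symmetric, and the two halves of Compatibility then follow from uniform continuity and from the uniform attraction $a_n\to0$, which forces the tail of $D$ to be small once the diameter is.
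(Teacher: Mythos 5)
The paper does not actually prove this theorem: it is imported verbatim from \cite{ACCV3}, where $D$ is constructed via Artigue's self-similar hyperbolicity machinery (a dynamically defined pre-gauge on pointed continua, made exponential through the iterate count for which the continuum stays small, followed by a Frink-type chain construction to restore subadditivity --- the factor $4$ in item (2) is precisely the loss in Frink's metrization lemma). Your architecture is recognizably the same, and your one fully executed step is correct: the hyperspace compactness argument showing $\sup\{\diam(f^n(C)) : C\in\C^s_\eps(X)\}\to 0$ is exactly the standard consequence of cw-expansiveness (due to Kato) that drives the whole construction, and your reformulation of cw-expansiveness as the non-existence of a nondegenerate continuum with $\diam(f^m(K))\le\eps$ for all $m\in\Z$ is right.

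The gap is that the construction of $D$ itself is a plan rather than a proof, and one step you call ``immediate'' is in fact the crux. For a chain-infimum $D(C_{(p,q)})=\inf\sum_i\rho(\cdot)$ over chains $p=x_0,\dots,x_m=q$, property (1a) is \emph{not} immediate: the infimum over sums of small gauges can collapse to $0$ on a nondegenerate continuum (this is the classical failure mode that Frink's lemma exists to rule out). One must first verify a weak triangle inequality for the pre-gauge, of the form $\rho([A\cup B]_{(a,c)})\le 2\max\{\rho(A_{(a,b)}),\rho(B_{(b,c)})\}$, and then conclude the two-sided comparison $\tfrac14\rho\le D\le\rho$; only this lower bound gives (1a), and it is also the source of the constant $4$ in (2). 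You gesture at the factor $4$ ``absorbing the pre-factor,'' but you never state the weak triangle inequality for your gauge, let alone check that the exponential contraction $\rho(f(C))\le\lambda\rho(C)$ on $\C^s_\eps(X)$ survives the passage to $D$ (it does only because of the comparison above, applied to $f^n(C)$ as well as to $C$). Likewise, the assembly of $\rho^s$ and $\rho^u$ into a single real-valued $D$ defined on \emph{all} of $E$ --- satisfying (2a) and (2b) on the respective classes and (3) globally --- is exactly the part you flag as ``where the real work lies'' and then leave at the level of ``I would transport and verify.'' Without the weak triangle inequality and the resulting Frink comparison, nothing in items (1a), (2), or (3) is established.
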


In what follows, $\eps$ and $\delta$ are chosen as in Definition \ref{holonomies}.

\begin{definition}\label{cw-cont}[Jointly continuous stable/unstable holonomies]. We say that $f$ has \emph{jointly continuous stable/unstable holonomies} if for each $\eta>0$ there exists $\ga>0$ satisfying:
if $d(x,y)<\delta$, $C$ is a subcontinuum of $C^s_{\eps}(x)$, $p,q\in C$, $D(C_{(p,q)})\leq \ga$, $p^*\in \pi^u_{x,y}(p)$, $C'$ is a subcontinuum of $C^u_{\eps}(p)$ containing $p$ and $p^*$, and $D(C'_{(p,p^*)})\leq\ga$, then there exist subcontinua $C^*$ of $C^s_{\eps}(y)$ containing $p^*$ and $C^{**}$ of $C^u_{\eps}(q)$ containing $q$, and there exists $q^*\in C^{*}\cap C^{**}$ such that 
$$|D(C^*_{(p^*,q^*)})|\leq \eta \,\,\,\,\,\, \text{and} \,\,\,\,\,\, |D(C^{**}_{(p,p^*)})|\leq \eta.$$
\end{definition}

The idea is that a rectangle formed by local stable/unstable continua has small sides provided that we know that at least one stable and one unstable of its sides are sufficiently small (see Figures 1 and 2 in \cite{CR}). One of the main steps in the proof of density of periodic points is that the cw-metric can be adapted to be a self-similar hyperbolic cw-metric \cite{CR}*{Theorem 3.2} (following techniques of Artigue in \cite{Ar}) and, in this case, the local stable/unstable holonomies are jointly pseudo-isometric \cite{CR}*{Theorem 3.6}, as in the following definition.

\begin{definition}[Jointly pseudo-isometric stable/unstable holonomies]\label{pseudo}
We say that $f$ has \emph{jointly pseudo-isometric stable/unstable holonomies} if for each $\eta>0$ there exists $\gamma>0$ satisfying:
if $d(x,y)<\delta$, $C$ is a subcontinuum of $C^s_{\eps}(x)$, $p,q\in C$, $D(C_{(p,q)})\leq \gamma$, $p^*\in \pi^s_{x,y}(p)$, $C'$ is a subcontinuum of $C^u_{\eps}(q)$ containing $p$ and $p^*$, and $D(C'_{(p,p^*)})\leq\gamma$, then there exist subcontinua $C^*$ of $C^s_{\eps}(y)$ containing $p^*$ and $C^{**}$ of $C^u_{\eps}(q)$ containing $q$, and there exists $q^*\in C^{*}\cap C^{**}$ such that 
$$\left|\frac{D(C^*_{(p^*,q^*)})}{D(C_{(p,q)})}-1\right|\leq \eta \,\,\,\,\,\, \text{and} \,\,\,\,\,\, \left|\frac{D(C^{**}_{(p,p^*)})}{D(C'_{(q,q^*)})}-1\right|\leq \eta.$$
In the above case we say that $\pi^s_{x,y}$ is a \emph{pseudo-isometric local stable holonomy} and that $\pi^u_{x,y}$ is a \emph{pseudo-isometric local unstable holonomy}.
\end{definition}

%\begin{definition}[Pseudo-isometric local stable and local unstable holonomies]
%We say that $\pi^s_{x,y}$ is a \emph{pseudo-isometric local stable holonomy} if for each $\eta>0$ there is $\gamma>0$ satisfying:
% if $C$ is a subcontinuum of $C^s_{\delta}(x)$, $p,q\in C$, $D(C_{(p,q)})\leq \gamma$, and $q^*\in \pi^s_{x,y}(q)$, then there exists a subcontinuum $C^*$ of $C^s_{\eps}(y)$ containing $q^*$ and there exists $p^*\in \pi^s_{x,y}(p)\cap C^*$ such that 
% \begin{equation*}
% \left|\frac{D(C^*_{(p^*,q^*)})}{D(C_{(p,q)})}-1\right|\leq \eta.
% \end{equation*}
%Analogously, we can define what means $\pi^u_{x,y}$ to be a \emph{pseudo-isometric local unstable holonomy}. 
%\end{definition}

Thus, in the proof of the periodic shadowing property, we assume that $D$ is a self-similar hyperbolic cw-metric and that local stable/unstable holonomies are jointly pseudo-isometric.

\begin{theorem}\label{thm:tracing}
If a cw-hyperbolic homeomorphism $f\colon X\to X$ has jointly continuous stable/unstable holonomies, then for each $\alpha>0$ there exist $\delta>0$ and $r>0$ such that any periodic $\delta$-pseudo-orbit $(y_i)_{i=0}^{n}$ with $n\geq r$ is $\alpha$-shadowed by a point $p$ satisfying $f^n(p)=p$.
In particular, $f$ satisfies the periodic shadowing property.
\end{theorem}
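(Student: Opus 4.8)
The plan is to prove the periodic shadowing property by first applying the ordinary shadowing property (which cw-hyperbolicity already provides via \cite{ACCV3}*{Theorem 2.5}) to produce a shadowing point, and then correct that point along stable/unstable continua using the holonomies until it becomes genuinely periodic. More precisely, given $\alpha>0$ I would first fix $\eps=c/2$ and the associated $\delta'$ and $\delta$ from Definition \ref{holonomies}, and shrink $\alpha$ if necessary so that $\alpha<\delta$. Given a periodic $\delta$-pseudo-orbit $(y_i)_{i=0}^n$ with $y_n=y_0$, I would unfold it into the bi-infinite periodic pseudo-orbit obtained by concatenating copies of the block $(y_0,\dots,y_{n-1})$, and apply shadowing to obtain a point $z$ with $d(f^i(z),y_i)<\eps$ for all $i\in\integer$ (indices read modulo $n$). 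The point $z$ need not be periodic, but $f^n(z)$ shadows the \emph{same} bi-infinite pseudo-orbit as $z$, so $z$ and $f^n(z)$ are uniformly close at every iterate; the aim is to replace $z$ by a nearby point whose $f^n$-image equals itself.

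The key step is to measure the defect between $z$ and $f^n(z)$ inside the local product structure and then annihilate it. Since $d(f^i(z),f^i(f^n(z)))<2\eps\leq c$ for every $i\ge 0$ and every $i\le 0$ (using that both orbits shadow the same periodic pseudo-orbit), the cw-local-product-structure produces a point $w\in C^u_\eps(z)\cap C^s_\eps(f^n(z))$, i.e.\ $f^n(z)$ lies in the local stable continuum of $w$ and $z$ lies in the local unstable continuum of $w$. The hyperbolicity estimates of Theorem \ref{teoCwHyp}(2) guarantee that forward iterates contract the cw-distance along stable continua and backward iterates contract it along unstable continua by the factor $4\lambda^k$. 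I would then define the sought periodic point as a limit of the ``shadowing-and-correcting'' procedure: using the jointly pseudo-isometric holonomies (Definition \ref{pseudo}, available to us by \cite{CR}*{Theorems 3.2 and 3.6} after passing to a self-similar hyperbolic cw-metric) to transport the stable defect of $z$ across the unstable direction in a controlled way, one obtains a sequence of candidate points whose stable/unstable defects decay geometrically. The role of the hypothesis $n\geq r$ is exactly to make the contraction factor $4\lambda^n$ strictly less than $1$, so that the correction is a genuine contraction and the iteration converges to a point $p$ with $f^n(p)=p$.

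Concretely, I would build $p$ as a fixed point of the map $\Phi$ that sends a point $u$ close to $z$ to the unique intersection $C^u_\eps(u)\cap C^s_\eps(f^n(u))$, or rather the point on $C^s_\eps(z)$ produced by holonomy-transporting $f^n(u)$ back; the pseudo-isometric property ensures that $\Phi$ contracts the cw-metric $D$ between successive iterates by a factor comparable to $4\lambda^n(1+\eta)$, which is $<1$ once $n\ge r$ and $\eta$ is chosen small in the definition of jointly pseudo-isometric holonomies. The limit point $p$ lies in $C^s_\eps(z)$, hence $d(f^i(p),f^i(z))\le\eps$ for $i\ge 0$, and by construction $f^n(p)=p$; combining with $d(f^i(z),y_i)<\eps$ and choosing the constants so that the two $\eps$-errors sum to at most $\alpha$ yields $d(f^i(p),y_i)<\alpha$ for all $i$, which is the desired $\alpha$-shadowing by a point of period dividing (a multiple of) $n$. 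The final sentence ``in particular, $f$ satisfies the periodic shadowing property'' then follows by taking $m$ large enough that $km\ge r$ for a given periodic pseudo-orbit of length $k$, since the point periodic under $f^n=f^{km}$ is exactly what Definition \ref{periodicshadowing} requires.

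The main obstacle I anticipate is making the correction map $\Phi$ well-defined and genuinely contracting despite the fact that, beyond the hyperbolic setting, $W^s_\eps$ and $W^u_\eps$ are badly behaved and the holonomies $\pi^s_{x,y},\pi^u_{x,y}$ take values in \emph{sets} of continua rather than points. The whole force of the argument rests on the jointly pseudo-isometric hypothesis: it is what converts the set-valued, a priori non-quantitative cw-local-product-structure into a quantitative estimate comparing the cw-length of a transported stable side with the original one up to a factor $(1\pm\eta)$, thereby letting the hyperbolic contraction of Theorem \ref{teoCwHyp} survive the holonomy transport. Verifying that the defects close up coherently around the full period (so that the transported point actually returns to the stable continuum of $z$ and the composed holonomy around the loop remains controlled) is the delicate bookkeeping that I expect to occupy the bulk of the proof.
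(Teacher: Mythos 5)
Your overall strategy is the right one --- use the hyperbolic contraction of the cw-metric $D$ from Theorem \ref{teoCwHyp} together with the pseudo-isometric holonomies of \cite{CR} to run a geometrically convergent correction scheme, with $r$ (equivalently $n\geq r$) chosen large enough that $4\lambda^r$ beats the constant $4$ and the $(1+\eta)$ losses from the holonomies --- and your reduction of the second sentence of the theorem to the first is the same as the paper's. The architecture is genuinely different, though: you first invoke ordinary shadowing to produce a single point $z$ and then try to close up the orbit of $z$ by a fixed-point argument for a self-map $\Phi$ built from $f^n$, whereas the paper never uses the shadowing property as a black box and instead iteratively corrects the \emph{entire} periodic pseudo-orbit, producing at stage $k$ a new periodic $\delta$-pseudo-orbit $(y^{(k)}_i)_i$ for $f^r$ whose defect continua $F^{(k)}_i$ satisfy $D((F^{(k)}_i)_{(y^{(k)}_i,f^r(y^{(k)}_{i-1}))})\leq 2((1+\delta)^2 4\lambda^r)^k c$, and then passes to the limit $q_i=\lim_k y^{(k)}_i$ to get $f^r(q_i)=q_{i+1}$ and hence $f^{rn}(q_0)=q_0$.

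The genuine gap is in your central step: the map $\Phi$ you write down does not have the property that a fixed point is periodic, and you do not show that iterating it contracts the defect. A fixed point $p$ of $u\mapsto C^u_\eps(u)\cap C^s_\eps(f^n(u))$ satisfies only $p\in C^s_\eps(f^n(p))$ (the condition $p\in C^u_\eps(p)$ is vacuous), which says $p$ and $f^n(p)$ are forward-asymptotic, not equal; in the cw-expansive setting $W^s_c(p)\cap W^u_c(p)$ can be a Cantor set, so this cannot be upgraded to $f^n(p)=p$ by expansivity. To force equality you must make \emph{both} the stable and the unstable components of the defect between $u$ and $f^n(u)$ tend to zero, and a single application of your $\Phi$ does not obviously shrink the total defect: comparing $w=\Phi(u)$ with $f^n(w)$ via $w\to u\to f^n(u)\to f^n(w)$ leaves the original defect $u\to f^n(u)$ undiminished in the middle. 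This is exactly what the paper's two-stage correction at each step resolves (push the stable side forward by $f^r$ so it contracts by $4\lambda^r$, transport it by the stable holonomy losing a factor $(1+\delta)$, then apply the joint holonomy to produce the new intersection point $z^{(k)}_i$ and control the new unstable side $\bar C^{(k)}_i$ as well), with the summability condition $\sum_k[(1+\delta)^2 4]^k\lambda^{kr}\leq\beta$ ensuring the accumulated holonomy losses --- one per chunk per stage, not a single $(1+\eta)$ as in your estimate --- stay bounded. In addition, $\Phi$ is set-valued (the intersection $C^u_\eps(u)\cap C^s_\eps(f^n(u))$ is a compact set, not a point), so a contraction-mapping-theorem invocation is not available as stated; the paper avoids this by never asking for a fixed point of a map, only for convergence of the explicitly constructed sequences $y^{(k)}_i$ via compatibility of $D$ with $\diam$. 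The ``delicate bookkeeping'' you defer at the end is therefore not a verification left to the reader but the actual content of the proof.
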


\begin{proof}
Before providing detailed proof, let us sketch its idea first. Given $\alpha>0$ we use cw-hyperbolicity to find $r\in(0,\alpha)$ such that stable/unstable continua shrink sufficiently fast forward/backwards under iterates of $f^r$. Then using stable/unstable holonomies, for a given periodic $\delta$-pseudo-orbit $y_0^{(0)},\ldots, y_n^{(0)}$ for $f^r$ we construct a sequence of continua $(H^{(k)}_i)_{k\in\N}$ such that their union $\overline{\left(\bigcup_{k=1}^{+\infty}{H^{(k)}_i}\right)}$ is connected, has diameter at most $\alpha$ and contains $y_i^{(0)}$ as well as $f^{ri}(q_0)$ for some periodic point $q_0$. Periodicity of $q_0$ is a consequence of construction, because $f^{ri}(q_0)\in \overline{\left(\bigcup_{k=j}^{+\infty}{H^{(k)}_i}\right)}$ for any $j\in\N$, diameters of these continua tend to $0$ with $j\to\infty$ and they are constructed in such a way that they intersect for $i=0$ and $n$. In this way, we get that $y_0^{(0)},\ldots, y_n^{(0)}$ is $\alpha$-traced by $q_0$ for $f^r$. Then the argument is extended, showing that in fact $q_0$ is $2\alpha$-tracing
the sequence $y_0^{(0)},\ldots, f^{r-1}(y_0^{(0)}), y_1^{(0)},\ldots, y_n^{(0)},\ldots, f^{r-1}(y_n^{(0)})$ for $f$.
The argument is completed by the observation, that if $x_0,\ldots,x_{r-1}$ is a $\delta$-pseudo orbit whit sufficiently small $\delta$ then $x_0$ can be used as a point $\alpha$-tracing it.
Now, let us present the details.

For each $\alpha>0$, let $\kappa\in(0,\frac{\alpha}{2})$ be given by the compatibility between $D$ and $\diam$, such that
$$\diam(C)\leq\alpha \,\,\,\,\,\, \text{whenever} \,\,\,\,\,\, D(C_{(x,y)})\leq\kappa$$ for some $x,y\in C$.
Let $c\in(0,\frac{\kappa}{2})$ be a cw-expansive constant of $f$ and choose $\eps\in(0,\frac{c}{2})$, given by the compatibility of $D$ and $\diam$, that is
$$D(C_{(x,y)})\leq c \,\,\,\,\,\, \text{whenever} \,\,\,\,\,\, \diam(C)\leq2\eps.$$
Choose $\delta'\in(0,\eps)$, given by the $cw$-local-product-structure, such that 
$$C^u_\eps(x)\cap C^s_\eps(y)\neq\emptyset \,\,\,\,\,\, \text{whenever} \,\,\,\,\,\, d(x,y)<\delta',$$ 
and let $\delta=\frac{\delta'}{2}$. Thus, the holonomy maps $$\pi^s_{x,y}\colon C^s_{\delta}(x)\to \mathcal{K}(C^s_{\eps}(y)) \,\,\,\,\,\, \text{and} \,\,\,\,\,\, \pi^u_{x,y}\colon C^u_{\delta}(x)\to \mathcal{K}(C^u_{\eps}(y))$$ are well defined when $d(x,y)<\delta$ and are pseudo-isometric, if we work with sufficiently small continua. To make it precise,
%Now 
choose $\gamma\in(0,\frac{\delta}{2})$, as in Definition \ref{pseudo} of jointly pseudo-isometric stable/unstable holonomies for $\eta=\delta$, and let $\beta\in(0,\min\{\frac{\gamma}{2},\frac{\kappa}{2c}-1\})$ be given by the compatibility between $D$ and $\diam$, such that
$$\diam(C)\leq\gamma \,\,\,\,\,\, \text{whenever} \,\,\,\,\,\, D(C_{(x,y)})\leq2\beta$$ for some $x,y\in C$.
Let $r\in\N$ be such that $4\lambda^{r}c\leq\beta$ and 
$$\sum_{n=1}^{\infty}[(1+\delta)^24]^n\lambda^{nr}\leq\beta.$$
This last inequality is ensured by Lemma 2.8 in \cite{CR}. Observe that if we increase $r$, the above inequality holds as well.
%Let $r$ be such that $4\lambda^r c<\beta$, where all constants are as in the proof of dense periodic points.	
Let $y_0^{(0)},\ldots, y_{n-1}^{(0)}$ be points such that $d(f^r(y_i^{(0)}),y_{i+1}^{(0)})<\delta$ for $i=0,\ldots,n-1$, where we put $ y_{n}^{(0)}= y_{0}^{(0)}$ and $y_{-1}^{(0)}= y_{n-1}^{(0)}$. In other words, we fix an arbitrary periodic $\delta$-pseudo orbit for $f^r$.
%What follows is a little bit sketchy.
The cw-local-product-structure ensures that for each $i\in\{0,\dots,n-1\}$ there exists $$z^{(0)}_i\in C^u_\eps(f^r(y_{i-1}^{(0)}))\cap C^s_\eps(y_i^{(0)}).$$ We let $ z_{n}^{(0)}= z_{0}^{(0)}$ and $z_{-1}^{(0)}= z_{n-1}^{(0)}$.
The hyperbolicity of $D$ and the choice of the constants ensure that
$$D(f^r (C^s_\eps (y_i^{(0)}))_{(f^r(y_i^{(0)}),f^r(z_i^{(0)}))})\leq4\lambda^rc\leq\beta.$$
It follows that $\diam(f^r (C^s_\eps (y_i^{(0)}))<\delta$ and consequently $f^r (C^s_\eps (y_i^{(0)}))\subset C^s_\delta(f^r(y_i^{(0)})) $. 
Let $C_{i+1}^{(0)}=f^r(C^s_\eps (y_{i}^{(0)}))$, $p=f^r(y_{i}^{(0)})$, $q=f^r(z_{i}^{(0)})$, and 
$$p^*=z_{i+1}^{(0)}\in C^u_\eps(f^r(y_{i}^{(0)}))\cap C^s_\eps(y_{i+1}^{(0)})=\pi^s_{f^r(y_{i}^{(0)}),y_{i+1}^{(0)}}(f^r(y_{i}^{(0)})).$$
Since $f$ has pseudo-isometric local stable holonomies, there exist a continuum $C^{(1)}_{i+1}\subset C^s_\eps(y_{i+1}^{(0)})$ containing $p^*$ and 
$$q^*\in \pi^s_{f^r(y_{i}^{(0)}),y_{i+1}^{(0)}}(f^r(z_{i}^{(0)}))\cap C^{(1)}_{i+1}=C^u_\eps(f^r(z_{i}^{(0)}))\cap C^s_\eps(y_{i+1}^{(0)})\cap C^{(1)}_{i+1}$$ 
such that $$D((C^{(1)}_{i+1})_{(p^*,q^*)})\leq (1+\delta)D((C_{i+1}^{(0)})_{(p,q)})\leq (1+\delta)\beta<\gamma.$$ 
Let $y_i^{(1)}=f^{-r}(q^*)$ and note that $d(z_{i+1}^{(0)},f^{r}(y_{i}^{(1)}))<\delta/2$ by the choice of $\beta$. Furthermore, 
$$
y_i^{(1)}\in f^{-r}(C^u_\eps(f^r(z_{i}^{(0)}))):=\hat{C}^{(1)}_{i}\subset C^u_\delta(z_{i}^{(0)})
$$ 
and the hyperbolicity of $D$ and the choice of $\delta$ and $r$ ensure that
$$
D((\hat{C}^{(1)}_{i})_{(y_i^{(1)},z_i^{(0)})})<4\lambda^rc, \,\,\,\,\,\, \diam(\hat{C}^{(1)}_{i})<\delta/2, \,\,\,\,\,\, \text{and} \,\,\,\,\,\, d(y_i^{(1)},z_i^{(0)})<\delta/2.
$$
In particular, the above argument applied for every $i\in\{0,\dots,n-1\}$ ensures that
\begin{eqnarray*}
d(f^r(y_{i}^{(1)}),y_{i+1}^{(1)})&\leq& d(f^r(y_{i}^{(1)}),z_{i+1}^{(0)}) + d(z_{i+1}^{(0)},y_{i+1}^{(1)})\\
&\leq&\frac{\delta}{2} + \frac{\delta}{2} = \delta,
\end{eqnarray*}
where we let $y_{n}^{(1)}= y_{0}^{(1)}$ and $y_{-1}^{(1)}= y_{n-1}^{(1)}$.
Thus, 
$$f^r(y_{i-1}^{(1)}),z_i^{(0)}\in C^{(1)}_{i}, \,\,\,\,\,\, z_i^{(0)},y_i^{(1)}\in\hat{C}^{(1)}_{i}, \,\,\,\,\,\, z_i^{(0)}\in C^{(1)}_{i}\cap\hat{C}^{(1)}_{i},$$ 
$$D((C^{(1)}_{i})_{(f^r(y_{i-1}^{(1)}),z_i^{(0)})})\leq\gamma, \,\,\,\,\,\, \text{and} \,\,\,\,\,\,  D((\hat{C}^{(1)}_{i})_{(z_i^{(0)},y_i^{(1)})})\leq\gamma.$$
Letting $p=z_i^{(0)}$, $q=f^r(y^{(1)}_{i-1})$, and $p^*=y^{(1)}_i$, the pseudo-isometric joint stable/unstable holonomies ensure the existence of continua $\tilde{C}^{(1)}_{i}\subset C^s_\eps (y_{i}^{(1)})$ containing $p^*$ and $\bar{C}^{(1)}_{i}\subset C^u_\eps(f^r(y^{(1)}_{i-1}))$ containing $q$, and a point $z^{(1)}_i:=q^*\in \tilde{C}^{(1)}_{i}\cap \bar{C}^{(1)}_{i}$ such that
$$D( (\tilde{C}^{(1)}_{i})_{(p^*,q^*)})\leq(1+\delta)D({C}^{(1)}_{i})_{(p,q)}\leq(1+\delta)^24\lambda^rc$$ 
and
$$D( (\bar{C}^{(1)}_{i})_{(q,q^*)})\leq(1+\delta) (\hat{C}^{(1)}_{i})_{(p,p^*)}\leq(1+\delta)4\lambda^rc.$$
To end this step, let $F^{(1)}_i={C}^{(1)}_{i}\cup \hat{C}^{(1)}_{i}$ and $H^{(1)}_i={\hat{C}}^{(1)}_{i}\cup \tilde{C}^{(0)}_{i}$ for every $i\in\{0,\dots,n-1\}$, where $\tilde{C}^{(0)}_{i}:=C^s_\eps(y_{i}^{(0)})$.
%Thus, $y_0^{(1)},\ldots, y_{n-1}^{(1)}$ is a periodic $\delta$-pseudo-orbit for $f^r$ such that 
%and $D((C^{(1)}_{i})_{(z_i^{(0)},f^r(y_{i-1}^{(1)}))})<(1+\delta)4\lambda^rc<\gamma$.
%In particular, $f^r(y_{i-1}^{(1)})\in C^s_\delta(z_i^{(0)})$. By previous argument, also $D((\hat{C}^{(1)}_{i})_{(z_i^{(0)},y_{i}^{(1)})})<4\lambda^rc<\beta<\gamma$.
%Additionally, $y^{(1)}_i\in \pi^s_{z_i^{(0)},y_{i}^{(1)}}(z_i^{(0)})\cap \hat{C}^{(1)}_{i}$. Therefore putting $p=z_i^{(0)}$, $p^*=y^{(1)}_i$ and $q=f^r(y^{(1)}_{i-1})$, we may apply definition of  has pseudo-isometric joint stable/unstable holonomies of $f$ obtaining 
Repeating the previous argument for the points $y^{(1)}_0,\dots, y^{(1)}_{n-1}$ and $z^{(1)}_0,\dots, z^{(1)}_{n-1}$, we obtain for each $i\in\{0,\dots,n-1\}$ a point $y^{(2)}_i$ and continua $C^{(2)}_i$ and $\hat{C}^{(2)}_i$
such that
$$f^r(y_{i-1}^{(2)}),z_i^{(1)}\in C^{(2)}_{i}, \,\,\,\,\,\, z_i^{(1)},y_i^{(2)}\in\hat{C}^{(2)}_{i}, \,\,\,\,\,\, z_i^{(1)}\in C^{(2)}_{i}\cap\hat{C}^{(2)}_{i},$$ 
$$D((C^{(2)}_{i})_{(f^r(y_{i-1}^{(2)}),z_i^{(1)})})\leq(1+\delta)D(f^r(\tilde{C}^{(1)}_{i-1})_{(y^{(1)}_{i-1},z^{(1)}_{i-1})})\leq(1+\delta)^34^2\lambda^{2r}c,$$
$$\text{and} \,\,\,\,\,\, D((\hat{C}^{(2)}_{i})_{(y_i^{(2)},z_i^{(1)})})\leq4\lambda^rD( (\bar{C}^{(1)}_{i+1})_{(f^r(y^{(1)}_{i}),z_{i+1}^{(1)})}) \leq(1+\delta)4^2\lambda^{2r}c$$
and also obtain continua $\tilde{C}^{(2)}_{i}\subset C^s_\eps (y_{i}^{(2)})$ containing $y^{(2)}_i$ and $\bar{C}^{(2)}_{i}\subset C^u_\eps(f^r(y^{(2)}_{i-1}))$ containing $f^r(y^{(2)}_{i-1})$, and a point $z^{(2)}_i\in \tilde{C}^{(2)}_{i}\cap \bar{C}^{(2)}_{i}$ such that
$$ 
D((\tilde{C}^{(2)}_{i})_{(y_{i}^{(2)},z_i^{(2)})})\leq(1+\delta)^44^2\lambda^{2r}c \,\,\,\,\,\, \text{and}
$$
$$
D((\bar{C}^{(2)}_{i})_{(f^r(y^{(2)}_{i-1}),z_i^{(2)})})\leq(1+\delta)^24^2\lambda^{2r}c.
$$
As in the previous step, let $F^{(2)}_i={C}^{(2)}_{i}\cup \hat{C}^{(2)}_{i}$ and $H^{(2)}_i={\hat{C}}^{(2)}_{i}\cup \tilde{C}^{(1)}_{i}$ for every $i\in\{0,\dots,n-1\}$.
An induction process ensures that for each $k\in\N$ and $i\in\{0,\dots,n-1\}$ there exist a point $y^{(k)}_i$ and continua $F_ i^{(k)}={C}^{(k)}_{i}\cup \hat{C}^{(k)}_{i}$ containing $y^{(k)}_i$ and $f^r(y^{(k)}_{i-1})$ satisfying
$$
D(({F}^{(k)}_{i})_{(y^{(k)}_i, f^r(y^{(k)}_{i-1}))})\leq2((1+\delta)^24\lambda^r)^kc.
$$
By the construction, we have $y_{n}^{(k)}= y_{0}^{(k)}$ and $y_{-1}^{(k)}= y_{n-1}^{(k)}$.
Since $$D(({F}^{(k)}_{i})_{(y^{(k)}_i, f^r(y^{(k)}_{i-1}))})\to0 \,\,\,\,\,\, \text{when} \,\,\,\,\,\, k\to\infty$$ 
and $D$ is compatible with $\diam$, it follows that both $y^{(k)}_i$ and $f^r(y^{(k)}_{i-1})$ converge to a point $q_i$ when $k\to \infty$. Also,
$$
f^r(q_0)=\lim_{k\to \infty}f^r(y^{(k)}_0)=\lim_{k\to \infty}y^{(k)}_1=q_1
$$
and consecutively $$f^{rj}(q_0)=\lim_{k\to \infty}y^{(k)}_j=q_j \,\,\,\,\,\, \text{for every} \,\,\,\,\,\, j\in\{0,\dots,n\}.$$ In particular,
$$
f^{rn}(q_0)=\lim_{k\to \infty}y^{(k)}_n=\lim_{k\to \infty}y^{(k)}_0=q_0,
$$
that is, $q_0$ is a periodic point. The induction process also ensures the existence of continua $H^{(k)}_i=\tilde{C}_i^{(k-1)}\cup \hat{C}_i^{(k)}$
containing $y^{(k-1)}_i$ and $y^{(k)}_i$ such that
$$
D({H^{(k)}_i}_{(y^{(k-1)}_i, y^{(k)}_i)}))\leq2((1+\delta)^24\lambda^{r})^{k-1}c
$$
The choices of $r$ and $\beta$ ensure that
$$
D\left(\overline{\left(\bigcup_{k=1}^{+\infty}{H^{(k)}_i}\right)}_{(y^{(0)}_i,f^{ri}(q_0))}\right)\leq2c(1+\beta)<\kappa
$$
and, hence,
$$
d(y^{(0)}_i,f^{ri}(q_0))<\alpha \,\,\,\,\,\, \text{for every} \,\,\,\,\,\, i\in\{0,\dots,n\},
$$
that is, $q_0$ is a periodic point that $\alpha$-shadows the periodic pseudo-orbit for $f^r$. 
Let $(x_k)_{k=0}^{nr}$ be the periodic $\delta$-pseudo-orbit of $f$ obtained by considering the segments of orbit of $f$ from 0 to $r$ of the points $(y_i^{(0)})_{i=0}^{n-1}$. We prove that $(x_k)_{k=0}^{rn}$ is $2\alpha$-shadowed by $q_0$.
Indeed, if $i\in\{0,\dots,n-1\}$, $j\in\{0,\dots,r\}$, and $k\geq2$, then
\begin{eqnarray*}
D(f^j({H^{(k)}_i})_{(f^j(y^{(k-1)}_i), f^j(y^{(k)}_i))}))&\leq&\lambda^{-j}D({H^{(k)}_i}_{(y^{(k-1)}_i, y^{(k)}_i)}))\\
&\leq&2((1+\delta)^24\lambda^{r})^{k-1}\lambda^{-j}c\\
&\leq&2((1+\delta)^24\lambda^{r})^{k-2}
\end{eqnarray*}
and therefore
$$
D\left(\overline{\left(\bigcup_{k=2}^{+\infty}{f^j(H^{(k)}_i)}\right)}_{(f^j(y^{(1)}_i),f^{ri+j}(q_0))}\right)\leq2c(1+\beta)<\kappa,
$$
which ensures that
$$
d(f^j(y^{(1)}_i),f^{ri+j}(q_0))<\alpha.
$$
Since $H_i^{(1)}=\hat{C}_i^{(1)}\cup\tilde{C}_i^{(0)}$, $\hat{C}_i^{(1)}=f^{-r}(C^u_\eps(f^r(z_{i}^{(0)})))$, and $\tilde{C}_i^{(0)}=C^s_\eps(y_i^{(0)})$, it follows that
$$
d(f^j(y_i^{(0)}),f^j(y_i^{(1)}))\leq \diam(f^j(H_i^{(1)}))\leq 2\eps
$$
which gives 
$$d(f^j(y_i^{(0)}),f^{ir+j}(q_0))<\alpha+2\eps<2\alpha.$$
This proves that $(x_k)_{k=0}^{rn}$ is $2\alpha$-shadowed by $q_0$. To finish the proof, we note that using continuity of $f^r$ we can reduce $\delta$, if necessary, to ensure that any $\delta$-pseudo-orbit of $f$ with length $r$ is $\alpha$-shadowed by its initial point. Thus, we can split any periodic $\delta$-pseudo-orbit $(y_k)_{k=0}^{rn}$ of $f$ into pieces of length $r$ and obtain a periodic-pseudo-orbit of $f^r$ that is $\alpha$-shadowing it. The shadowing orbit obtained as before will $3\alpha$-shadow $(y_k)_{k=0}^{rn}$.
%{\red [There is a gap above. Trying to repair it]
%For $k\geq 2$ and $j=0,\ldots,r-1$ we have
%$$
%D(f^j({H^{(k)}_i})_{(f^j(y^{(k-1)}_i), f^j(y^{(k)}_i))}))\leq2((1+\delta)^24\lambda^{r})^{k-1}\lambda^{-j}c\leq2((1+\delta)^24\lambda^{r})^{k-2}
%$$
%and therefore
%$$
%D\left(\overline{\left(\bigcup_{k=2}^{+\infty}{H^{(k)}_i}\right)}_{(y^{(0)}_i,f^{ri}(q_0))}\right)\leq2c(1+\beta)<\kappa
%$$
%hence
%$$
%d(f^{ri+j}(q_0),f^j(y^{(1)}_i))<\alpha.
%$$
%But $\hat{C}^{(1)}=f^{-r}(C^u_\eps(f^r(z_{i}^{(0)})))$ and %$\tilde{C}_i^{(0)}\subset C^s_\eps(y_i^{(0)})$
%hence
%$$
%d(f^j(y_i^{(0)}),f^j(y_i^{(1)}))\leq \diam f^j(H_i^{(0)})\leq 4\eps.
%$$
%This gives $d(f^{ir+j}(q_0),f^j(y_i^{(0)}))%<\alpha+4\eps<5\alpha$
%for every $i$ and $j=0,\ldots,r-1$.
%}
This deals with the case of pseudo-orbits with lengths multiples of $r$. To deal with the general case, we need a small adjustment. Namely, observe that we do not have to consider the sequence of pieces of pseudo-orbits of constant length $r$. Simply, if we have $r\leq r_i< 2r$ and $y_0^{(0)},\ldots, y_{n-1}^{(0)}$ be points such that $d(f^{r_i}(y_i^{(0)}),y_{i+1}^{(0)})<\delta$, then at the end we will get $f^{r_1+\ldots+r_n}(q_0)=q_0$. This will allow us to work with periodic $\delta$-pseudo orbit of suitable length. In the last step of the proof, we decreased $\delta$, but since all $r_i\leq 2r$, this can be done independently of the pseudo-orbit (using only the continuity of the maps $f, f^2,\ldots, f^{2r}$). 

This proves the first statement of the theorem and proves, in particular, that $f$ satisfies the periodic shadowing property since for pseudo-orbits $(y_i)_{i=0}^{n}$ with $n<r$ we can just consider the pseudo-orbit $(z_k)_{k=0}^{nm}$ (as in Definition \ref{periodicshadowing}) where $m\in\N$ is the first positive integer number such that $nm\geq r$, and obtain a periodic point $q=f^{nm}(q)$ shadowing it. This finishes the proof.
\end{proof}

\begin{remark}\label{expansiveandcw}
It is not hard to verify that if $f\colon X\to X$
is an expansive homeomorphism, $(y_j)_{j=0}^{n}$ is a periodic $\delta$-pseudo-orbit, and $q=f^{nr}(q)$ is a periodic point $\alpha$-shadowing the pseudo-orbit $(z_j)_{j=0}^{nr}$ as in Definition \ref{periodicshadowing}, then $f^n(q)=q$ (provided $2\alpha$ is an expansive constant). This means that periodic pseudo-orbits of a certain length $n$ are shadowed by $n$-periodic points, that is, there is a match between the length of the pseudo-orbit and the period of the periodic point shadowing it. In Theorem \ref{thm:tracing}, we were only able to obtain a similar match for pseudo-orbits with sufficiently large length $n\geq r$, where $r$ depends on the constant $\alpha$. This can be an important difference between the periodic shadowing property in the topologically hyperbolic and the cw-hyperbolic cases (see Question \ref{5}).
\end{remark}

The periodic specification property follows directly from topological mixing and the periodic shadowing property.

\begin{definition}[Topological mixing]
A map $f\colon X\to X$ is called \emph{topologically mixing} if for any pair $U,V\subset X$ of non-empty open subsets, there exists $n\in\N$ such that $$f^k(U)\cap V\neq\emptyset \,\,\,\,\,\, \text{for every} \,\,\,\,\,\, k\geq n.$$
\end{definition}

\begin{definition}[Specification]
Let $\tau=\{I_1,\dots,I_m\}$ be a finite collection of disjoint finite subsets of consecutive integers, $I_j=[a_j,b_j]\cap\Z$ for some $a_j,b_j\in\Z$,
with
$$a_1\le b_1 < a_2\le b_2 <\ldots < a_m\le b_m.$$ Let a map $P\colon \bigcup_{j=1}^mI_j\rightarrow X$ be such that for each $I\in \tau$ and $t_1, t_2\in I$
we have $$f^{t_2-t_1}(P(t_1))=P(t_2).$$ We call a pair $(\tau,P)$ a \emph{specification}. We say that the specification $S=(\tau,P)$ is \emph{$L$-spaced}
if $a_{j+1}\geq b_j+L$ for all $j\in\{1,\dots,m-1\}$. Moreover, $S$ is \emph{$\eps$-shadowed} by $y\in X$ if $$d(f^k(y),P(k))<\eps \,\,\,\,\,\,
\textrm{for every} \,\,\, k\in \bigcup_{j=1}^mI_j.$$
We say that a homeomorphism $f\colon X\rightarrow X$ has the \emph{specification property} if for every $\eps>0$ there exists $L\in\N$ such that every
$L$-spaced specification is $\eps$-shadowed.
\end{definition}

\begin{definition}[Periodic specification]
We say that $f$ has the \emph{periodic specification property} if for every $\eps>0$ there exists $L\in\N$
such that every $L$-spaced specification is $\eps$-shadowed by a periodic point $y$ such that $f^{b_m+L}(y)=y$.
\end{definition}

The specification property was introduced by Bowen in \cite{B} and is an important property in the study of topological and statistical properties in dynamical systems. There are other important classes beyond the hyperbolic systems which satisfy the specification property, such as mixing interval maps or, more generally, graph maps, mixing cocyclic shifts, among others (see \cite{Blokh, Blokh2}, more recent \cite{Buzzi,HKO}, or survey paper \cite{KLO}).

\begin{theorem}\label{thm:perspec}
If a topologically mixing and cw-hyperbolic homeomorphism $f\colon X\to X$ has jointly continuous stable/unstable holonomies, then it satisfies the periodic specification property.
\end{theorem}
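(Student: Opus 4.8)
The plan is to derive the periodic specification property by combining topological mixing with the periodic shadowing property established in Theorem~\ref{thm:tracing}. The overall strategy mirrors the classical argument for expansive systems with shadowing: first convert a given $L$-spaced specification into a periodic pseudo-orbit, then shadow it by a periodic point. The key mechanism that makes this conversion possible is that topological mixing lets us bridge the gaps between the consecutive orbit segments $P|_{I_j}$ by genuine orbit pieces, producing an honest (closed-up) pseudo-orbit whose shadowing point carries the required periodicity.

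First I would fix $\eps>0$ and let $\delta>0$ be the constant given by the periodic shadowing property of Theorem~\ref{thm:tracing} for the tracing accuracy $\eps$ (together with the associated threshold $r$). Next I would invoke topological mixing: by compactness of $X$ one can cover $X$ by finitely many $\delta/2$-balls, and mixing provides a uniform transition time $N\in\N$ such that for every pair of points $a,b\in X$ and every $k\geq N$ there is a point whose $f$-orbit starts $\delta$-close to $a$ and, after exactly $k$ iterates, lands $\delta$-close to $b$. (Concretely, apply mixing to the finitely many pairs of $\delta/2$-balls to extract a single $N$ that works for all pairs.) I would then set the spacing constant $L$ large enough that $L\geq N$ and also $L\geq r$, ensuring both that the gaps are long enough to be bridged and that the resulting closed pseudo-orbit exceeds the length threshold $r$ from Theorem~\ref{thm:tracing}.

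Given an $L$-spaced specification $S=(\tau,P)$ with $\tau=\{I_1,\dots,I_m\}$, I would build a periodic $\delta$-pseudo-orbit as follows. Along each block $I_j=[a_j,b_j]$ I follow the true orbit $P(a_j),P(a_j+1),\dots,P(b_j)$, which has zero jumps since $f^{t_2-t_1}(P(t_1))=P(t_2)$. In each gap from $b_j$ to $a_{j+1}$, whose length is at least $L\geq N$, I insert a connecting orbit segment provided by mixing that starts $\delta$-close to $f(P(b_j))$ and ends $\delta$-close to $P(a_{j+1})$; finally I close up the orbit by inserting, after $b_m$, a gap of length exactly $L$ bridging $f(P(b_m))$ back to $P(a_1)$ so that the total length of the constructed closed pseudo-orbit equals $b_m-a_1+L+1$ (in the periodic shadowing normalization of Definition~\ref{periodicshadowing}). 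All jumps are then at most $\delta$, so this is a genuine periodic $\delta$-pseudo-orbit. Since its length is at least $L\geq r$, Theorem~\ref{thm:tracing} yields a periodic point $y$ with $f^{b_m+L}(y)=y$ that $\eps$-shadows it; restricting the shadowing estimate to the indices in $\bigcup_{j=1}^m I_j$ shows that $y$ $\eps$-shadows the specification $S$, which is exactly the periodic specification property.

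The main obstacle I anticipate is bookkeeping of the indices and periods so that the period of $y$ matches $b_m+L$ exactly, rather than merely being a multiple of the pseudo-orbit length. Here Remark~\ref{expansiveandcw} flags a genuine subtlety: in the cw-hyperbolic setting Theorem~\ref{thm:tracing} only guarantees $f^{nm}(y)=y$ for some $m\geq 1$ when the base length $n$ is below the threshold $r$, so an exact length-period match is not automatic. The fix is to arrange, by the choice $L\geq r$, that the constructed closed pseudo-orbit already has length at least $r$, placing us in the regime where Theorem~\ref{thm:tracing} produces a point of period exactly equal to the pseudo-orbit length $b_m+L$; one must verify carefully that the normalization of Definition~\ref{periodicshadowing} lines up with the index set of the specification so that the demanded relation $f^{b_m+L}(y)=y$ holds on the nose. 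The remaining steps—uniformity of the mixing transition time $N$ over all pairs and the triangle-inequality estimates bounding each jump by $\delta$—are routine and follow the expansive template closely.
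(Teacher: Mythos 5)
Your proposal is correct and follows essentially the same route as the paper: use mixing (uniformized over a finite cover) to bridge the gaps of the specification into a periodic $\delta$-pseudo-orbit of length $b_m+L\geq r$, then invoke Theorem~\ref{thm:tracing} to obtain a shadowing point of period exactly equal to that length. Your care about the length threshold $r$ and the exact period match is precisely how the paper handles it (by enlarging $r$ so the spacing constant dominates both the mixing time and the threshold).
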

\begin{proof}
For each $\eps>0$, let $\delta>0$ and $r\in\N$ be provided to it by Theorem~\ref{thm:tracing}.
Mixing ensures that for any $\alpha>0$, there is $M_\alpha\in\N$ such that for any $x,y\in X$ and any $n\geq M_\alpha$ there is a $\alpha$-pseudo-orbit of length $n$ connecting $x$ and $y$. Enlarge $r$ when necessary, so that $r\geq M_\delta$.
For each $r$-spaced specification $\xi = {T^{[a_j,b_j]}(x_j)}^{n-1}_{j=0}$ we can use mixing as explained above to find a periodic $\delta$-pseudo-orbit $(\eta_i)_{i=0}^{b_{n-1}+r}$  such that $$\eta_i=T^i(x_j) \,\,\,\,\,\, \text{for every} \,\,\,\,\,\, i\in [a_j,b_j].$$ %Then 
Since length of $\eta$ is clearly larger than $r$, it is enough to apply Theorem~\ref{thm:tracing} to complete the proof.
\end{proof}

\section{Pseudo-Anosov diffeomorphism of $\mathbb{S}^2$}

Pseudo-Anosov diffeomorphisms are classical examples of expansive systems on surfaces and their study comes back to works of Thurston on classification of surface diffeomorphisms \cite{T}. In these systems, stable/unstable sets form a pair of transversal measured foliations with a finite number of singularities with a number of separatrices bigger than two \cite{Hi2}. Walters considered in \cite{W}*{Example 1, p. 140} an example to show that a factor of an expansive homeomorphism may not be expansive. It contains a pair of stable/unstable foliations with four singularities but is not pseudo-Anosov as in Thurston's definition since it is not expansive and its singularities are in the form of spines and have only one separatrix. This example was further explored in \cite{ACS}, \cite{Ar2}, \cite{ACCV}, \cite{ACCV2}, \cite{ACCV3}, \cite{CR}, \cite{PPV}, \cite{PV}, and now it is understood that the uniform dynamical structure that appears in this example is cw-hyperbolicity. In particular, it is cw$_2$-hyperbolic (see \cite{Ar2}*{Proposition 2.2.1}) and stable/unstable holonomies are jointly continuous, as proved in \cite{CR}*{Theorem 1.4}. We will make use of these facts later in our considerations, enabling us to apply Theorem~\ref{thm:tracing}.

We can describe this example as follows. Let $A$ be a hyperbolic matrix in $SL(2,\mathbb{Z})$ with eigenvalues $\lambda>1$ and $\lambda^{-1}\in(0,1)$. This matrix induces diffeomorphisms $f_A\colon\mathbb{T}^2\to\mathbb{T}^2$ and $g_A\colon\mathbb{S}^2\to\mathbb{S}^2$ where $f_A$ is an Anosov diffeomorphism and $g_A$ is a cw-Anosov diffeomorphism (see \cite{ACCV3}). The quotient map $\pi\colon\mathbb{T}^2\to\mathbb{S}^2$ to define $g_A$ is defined by relating antipodal points $x\sim-x$ in addition to the classical Torus relation on $\left[-\frac{1}{2},\frac{1}{2}\right] \times\left[-\frac{1}{2},\frac{1}{2}\right]$.
%, that is, $\pi(x)=\pi(-x)$ for every $x\in\mathbb{T}^2$. 
For each $n\in\N$, let
$$P_n(f_A)=\{x\in\mathbb{T}^2; \,\,f_A^n(x)=x\} \,\,\,\,\,\, \text{and} \,\,\,\,\,\, P_n^{-}(f_A)=\{x\in\mathbb{T}^2; \,\,f_A^n(x)=-x\}.$$ Points in the first set are called \emph{$n$-periodic} and in the second are called \emph{antipodal $n$-periodic}. Let $\Per_n(f_A)$ and $\Per_n^{-}(f_A)$ denote the cardinality of $P_n(f_A)$ and $P_n^{-}(f_A)$, respectively. These numbers can be calculated as follows.

\begin{lemma}\label{lem:per_and_perminus}
The following holds for every $n\in\N$:
$$\Per_n(f_A)=\lambda^n+\lambda^{-n}-2 \,\,\,\,\,\, \text{and} \,\,\,\,\,\, \Per_n^{-}(f_A)=\lambda^n+\lambda^{-n}+2.$$
\end{lemma}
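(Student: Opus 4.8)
The plan is to translate both counting problems into lattice-index computations on $\mathbb{R}^2$ and then evaluate the relevant determinants through the eigenvalues of $A$. First I would observe that since $A\in SL(2,\mathbb{Z})$ we have $\det A=1$, so the eigenvalues of $A$ are $\lambda$ and $\lambda^{-1}$, and hence those of $A^n$ are $\lambda^n$ and $\lambda^{-n}$. Lifting the defining equations to $\mathbb{R}^2$, a point $x\in\mathbb{T}^2$ satisfies $f_A^n(x)=x$ exactly when $(A^n-I)\tilde{x}\in\mathbb{Z}^2$ for a lift $\tilde{x}$ of $x$, and it satisfies $f_A^n(x)=-x$ exactly when $(A^n+I)\tilde{x}\in\mathbb{Z}^2$. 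Thus $\Per_n(f_A)$ and $\Per_n^{-}(f_A)$ are the numbers of solutions modulo $\mathbb{Z}^2$ of $B\tilde{x}\in\mathbb{Z}^2$ for $B=A^n-I$ and $B=A^n+I$, respectively.

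The key step is the classical counting principle that, for an integer matrix $B$ with $\det B\neq 0$, the number of $x\in\mathbb{T}^2$ with $Bx\equiv 0\pmod{\mathbb{Z}^2}$ equals $|\det B|$. I would prove this by noting that the set of lifts of such points is the lattice $B^{-1}\mathbb{Z}^2$, which contains $\mathbb{Z}^2$ because $B$ has integer entries, so the count is the index $[B^{-1}\mathbb{Z}^2:\mathbb{Z}^2]$. Applying the linear isomorphism $B$, which scales every lattice covolume by $|\det B|$ and therefore preserves indices of pairs of lattices, sends this pair to $\mathbb{Z}^2\supseteq B\mathbb{Z}^2$, whence $[B^{-1}\mathbb{Z}^2:\mathbb{Z}^2]=[\mathbb{Z}^2:B\mathbb{Z}^2]=|\det B|$. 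In particular both sets are finite precisely because the two determinants computed below are nonzero.

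It then remains to compute the determinants using the eigenvalues of $A^n$. Since $\det(A^n-I)=(\lambda^n-1)(\lambda^{-n}-1)=2-\lambda^n-\lambda^{-n}$ and $\lambda>1$ forces $\lambda^n+\lambda^{-n}>2$, we obtain $|\det(A^n-I)|=\lambda^n+\lambda^{-n}-2$, which gives the formula for $\Per_n(f_A)$. Likewise $\det(A^n+I)=(\lambda^n+1)(\lambda^{-n}+1)=\lambda^n+\lambda^{-n}+2>0$, which gives $\Per_n^{-}(f_A)=\lambda^n+\lambda^{-n}+2$. I expect the only point requiring genuine care to be the counting principle of the second paragraph, namely making the lattice-index argument precise and checking that $\det(A^n-I)\neq 0$ and $\det(A^n+I)\neq 0$ so that the fixed and antipodal sets are truly finite; the determinant evaluations themselves are routine once the eigenvalues are identified.
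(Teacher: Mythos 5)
Your proposal is correct and follows essentially the same route as the paper: both reduce the count of solutions of $(A^n\pm I)x\equiv 0$ on $\mathbb{T}^2$ to $|\det(A^n\pm I)|$ and then evaluate the determinant via the eigenvalues $\lambda^{\pm n}$. Your lattice-index formulation $[\mathbb{Z}^2:B\mathbb{Z}^2]=|\det B|$ is just a more explicitly justified version of the paper's area/degree argument, so there is nothing further to add.
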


\begin{proof}
The equation $f_A^n(x)=x$ is equivalent to $(f_A^n-Id)(x)=(0,0)$. The map $f_A^n-Id$ is a well-defined map on $\mathbb{T}^2$ that is induced by the linear map $A^n-Id$ on $\mathbb{R}^2$. The determinant of $A^n-Id$ is given by 
$$(\lambda^n-1)(\lambda^{-n}-1)=2-\lambda^n-\lambda^{-n}$$
and its modulus is exactly the area of the set $(A^n-Id)([0,1]\times[0,1])$ and determines exactly the number of pre-images of $f_A^n-Id$ at any point of $\mathbb{T}^2$. In particular, $(f_A^n-Id)^{-1}(0,0)$ contains exactly $\lambda^n+\lambda^{-n}-2$ points of $\mathbb{T}^2$ and, hence, $\Per_n(f_A)=\lambda^n+\lambda^{-n}-2$. A similar discussion can be used to determine the number of points $x\in\mathbb{T}^2$ satisfying $f_A^n(x)=-x$. The equation $f_A^n(x)=-x$ is equivalent to $(f_A^n+Id)(x)=(0,0)$ and the same reason explained above ensures that the number of solutions of $(f_A^n+Id)(x)=(0,0)$ is given by the modulus of the determinant of $A^n+Id$, that equals $\lambda^n+\lambda^{-n}+2$. This proves the equality $\Per_n^{-}(f_A)=\lambda^n+\lambda^{-n}+2$.
\end{proof}

Now we consider periodic points for $g_A$.
For each $n\in\N$, let 
$$P_n(g_A)=\{x\in\mathbb{S}^2; \,\, g_A^n(x)=x\} \,\,\,\,\,\, \text{and} \,\,\,\,\,\, \Per_n(g_A)=\#P_n(g_A).$$ In the following proposition, we calculate the number $\Per_n(g_A)$.

\begin{proposition}\label{PergA}
The following holds for every $n\in\N$:
\begin{enumerate}
    \item $\pi\colon P_n(f_A)\cup P^{-}_n(f_A)\to P_n(g_A)$ is 2 to 1;
    \item $\Per_n(g_A)=\frac{\Per_n(f_A)}{2}+\frac{\Per_n^{-}(f_A)}{2}=\lambda^n+\lambda^{-n}.$
\end{enumerate}
\end{proposition}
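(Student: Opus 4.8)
The plan is to understand exactly how the quotient map $\pi\colon\mathbb{T}^2\to\mathbb{S}^2$ identifies points and to track how periodic orbits of $f_A$ and antipodal-periodic orbits map down to periodic orbits of $g_A$. The key observation is that $\pi$ is the quotient by the relation $x\sim -x$ (composed with the torus relation), so $g_A$ is defined by $g_A\circ\pi=\pi\circ f_A$. This means that $g_A^n(\pi(x))=\pi(f_A^n(x))$, so $\pi(x)$ is $n$-periodic for $g_A$ precisely when $f_A^n(x)\sim x$, i.e. when either $f_A^n(x)=x$ (so $x\in P_n(f_A)$) or $f_A^n(x)=-x$ (so $x\in P^{-}_n(f_A)$). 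This already shows that $\pi$ maps $P_n(f_A)\cup P^{-}_n(f_A)$ \emph{into} $P_n(g_A)$, and a short argument that these are the only preimages gives that $\pi$ maps $P_n(f_A)\cup P^{-}_n(f_A)$ \emph{onto} $P_n(g_A)$.

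For part (1), the first step is to check that the two sets $P_n(f_A)$ and $P^{-}_n(f_A)$ are each invariant under the antipodal involution $x\mapsto -x$: if $f_A^n(x)=x$ then $f_A^n(-x)=-f_A^n(x)=-x$ (using linearity of $f_A$ on $\mathbb{T}^2$), so $-x\in P_n(f_A)$ as well, and similarly $P^{-}_n(f_A)$ is antipodally invariant. The second step is to verify that the antipodal map is fixed-point-free on the relevant sets, so that these sets decompose into genuine antipodal pairs $\{x,-x\}$ with $x\neq -x$; this is where I expect the main subtlety, since on $\mathbb{T}^2$ a point can equal its own antipode (the four half-integer points $(0,0),(1/2,0),(0,1/2),(1/2,1/2)$ satisfy $x=-x$), and I must confirm these exceptional points do not lie in $P_n(f_A)\cup P^{-}_n(f_A)$, or account for them carefully. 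Granting that the involution is fixed-point-free on this set, each fiber $\pi^{-1}(\pi(x))\cap(P_n(f_A)\cup P^{-}_n(f_A))$ is exactly the pair $\{x,-x\}$, which establishes that $\pi$ restricted to $P_n(f_A)\cup P^{-}_n(f_A)$ is $2$-to-$1$ onto $P_n(g_A)$.

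For part (2), once the map is known to be $2$-to-$1$ and surjective, the cardinality count is immediate: $\Per_n(g_A)=\frac{1}{2}\#\bigl(P_n(f_A)\cup P^{-}_n(f_A)\bigr)$. Since $P_n(f_A)$ and $P^{-}_n(f_A)$ are disjoint (a point cannot simultaneously satisfy $f_A^n(x)=x$ and $f_A^n(x)=-x$ unless $x=-x$, which the exceptional-point analysis rules out), the union has $\Per_n(f_A)+\Per^{-}_n(f_A)$ elements, giving $\Per_n(g_A)=\frac{\Per_n(f_A)}{2}+\frac{\Per^{-}_n(f_A)}{2}$. Plugging in the formulas from Lemma \ref{lem:per_and_perminus}, namely $\Per_n(f_A)=\lambda^n+\lambda^{-n}-2$ and $\Per^{-}_n(f_A)=\lambda^n+\lambda^{-n}+2$, the $\pm 2$ terms cancel and the total is $\lambda^n+\lambda^{-n}$, as claimed. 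The main obstacle throughout is the careful bookkeeping of the fixed points of the antipodal involution: I must confirm that the four half-period points of $\mathbb{T}^2$ (where $x=-x$) are not counted in $P_n(f_A)\cup P^{-}_n(f_A)$, which is consistent with the clean cancellation of the $\pm 2$ correction terms and is exactly why the final count comes out to the elegant closed form $\lambda^n+\lambda^{-n}$.
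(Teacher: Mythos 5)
Your overall strategy (use $g_A\circ\pi=\pi\circ f_A$ to identify $\pi^{-1}(P_n(g_A))$ with $P_n(f_A)\cup P_n^{-}(f_A)$, then count fibers of the antipodal involution) is the same as the paper's, but your resolution of the one subtlety you correctly flagged is wrong, and this is a genuine gap. You propose to show that the four half-integer points $(0,0),(\tfrac12,0),(0,\tfrac12),(\tfrac12,\tfrac12)$ (the fixed points of $x\mapsto -x$) do \emph{not} lie in $P_n(f_A)\cup P_n^{-}(f_A)$, and you use this both to get fixed-point-freeness of the involution and to claim $P_n(f_A)\cap P_n^{-}(f_A)=\emptyset$. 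This is false: $(0,0)$ is fixed by $f_A$, so $(0,0)\in P_n(f_A)$ for every $n$, and since $(0,0)=-(0,0)$ it also satisfies $f_A^n(x)=-x$, hence lies in $P_n^{-}(f_A)$ as well; the other three half-integer points are permuted by $A$ and join both sets for suitable $n$. Consequently the two sets are not disjoint, the involution is not fixed-point-free on their union, and $\pi$ restricted to $P_n(f_A)\cup P_n^{-}(f_A)$ is literally $1$-to-$1$ (not $2$-to-$1$) over the images of these spine points. A concrete check with $A=\bigl(\begin{smallmatrix}2&1\\1&1\end{smallmatrix}\bigr)$ and $n=1$: $\Per_1(f_A)=1$, $\Per_1^{-}(f_A)=5$, but $\#\bigl(P_1(f_A)\cup P_1^{-}(f_A)\bigr)=5$, not $6$, and $\Per_1(g_A)=3$.

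The correct bookkeeping, which is what the paper does, is the opposite of yours: the exceptional points belong to \emph{both} $P_n(f_A)$ and $P_n^{-}(f_A)$ and are therefore counted twice in the sum $\Per_n(f_A)+\Per_n^{-}(f_A)$, which exactly compensates for their singleton $\pi$-fibers; every other point of $P_n(g_A)$ has a genuine two-point fiber $\{x,-x\}$ contained in exactly one of the two sets. Either way each point of $P_n(g_A)$ contributes exactly $2$ to $\Per_n(f_A)+\Per_n^{-}(f_A)$, which is the real content of item (1) and yields $\Per_n(g_A)=\frac{\Per_n(f_A)}{2}+\frac{\Per_n^{-}(f_A)}{2}$. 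Your final arithmetic with Lemma \ref{lem:per_and_perminus} is fine, but the cancellation of the $\pm 2$ terms is not evidence that the exceptional points are absent --- it reflects precisely this double-counting. To repair your argument, replace the disjointness claim by the identity $\#\bigl(P_n(f_A)\cup P_n^{-}(f_A)\bigr)=\Per_n(f_A)+\Per_n^{-}(f_A)-\#\bigl(P_n(f_A)\cap P_n^{-}(f_A)\bigr)$ and observe that the intersection consists exactly of the exceptional $n$-periodic points, each of which is a singleton fiber.
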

%$$\frac{\Per_n(f_A)}{2}< \Per_n(g_A)< \frac{\Per_n(f_A)}{2}+\frac{\Per_n^{-}(f_A)}{2}+2.$$
\begin{proof}
First, note that if $f^n_A(x)=x$, then $f_A^{n}(-x)=-x$, $\pi(x)=\pi(-x)$, and $g_A^n(\pi(x))=\pi(x)$. This means that periodic points of $f_A$ exist in antipodal pairs (besides the following four points $(0,0)$, $(\frac{1}{2},0)$, $(0,\frac{1}{2})$, and $(\frac{1}{2},\frac{1}{2})$, which have the same projection %on $\mathbb{T}^2$ 
by $\pi$ as their antipodal points). Also, if $f_A^n(y)=y$, $y\neq x$, and $y\neq-x$, then $\pi(y)\neq\pi(x)$ and $g_A^n(\pi(y))=\pi(y)$, so each pair projects into a distinct periodic point of $g_A$.
Moreover, if $f^n_A(x)=-x$, then $f^n_A(-x)=x$, $f^{2n}_A(x)=x$, and $g^n_A(\pi(x))=\pi(x)$. Thus, antipodal $n$-periodic points of $f_A$ also project to $n$-periodic points of $g_A$.
%This proves, in particular, that 
%$$\frac{\Per_n(f_A)}{2}< \Per_n(g_A).$$ 
If $g_A^n(y)=y$ and $x\in\pi^{-1}(y)$, then either $f_A^n(x)=x$ and $f_A^n(-x)=-x$, or $f_A^n(x)=-x$ and $f_A^n(-x)=x$. This means that each periodic point of $g_A$ is a projection of a pair of antipodal points (besides the four points stated above) that are either periodic or antipodal periodic for $f_A$. To conclude the first item and the first equality in the second item, we just have to note that those four points are both periodic and antipodal periodic and, consequently, belong to both $P_n(f_A)$ and $P^{-}_n(f_A)$, and are being counted in both $\Per_n(f_A)$ and $\Per_n^{-}(f_A)$. The second equality follows by using the equalities $\Per_n(f_A)=\lambda^n+\lambda^{-n}-2$ and $\Per_n^{-}(f_A)=\lambda^n+\lambda^{-n}+2$ proved in Lemma \ref{lem:per_and_perminus}.
\end{proof}
%This proves that 
%$$\Per_n(g_A)< \frac{\Per_n(f_A)}{2}+\frac{\Per_n^{-}(f_A)}{2}+2.$$
%Since $\Per_n(f_A)=\lambda^n+\lambda^{-n}-2$ and $\Per_n^{-}(f_A)=\lambda^n+\lambda^{-n}+2$, it follows that
%$$\frac{\lambda^{n}}{4}<\frac{\lambda^n+\lambda^{-n}-2}{2}\leq \Per_n(g_A)\leq \lambda^n+\lambda^{-n}+1<2\lambda^{n}$$ and 
As a consequence, we conclude, in the following proposition, that Bowen's formula for calculating the topological entropy as the exponential growth of the number of periodic points also holds for $g_A$. First, we recall Bowen's definition of the topological entropy. 

\begin{definition}[Topological entropy]
Let $f\colon X\to X$ be a continuous map of a compact metric space. Given $n\in\N$ and $\delta>0$, we say that $E\subset X$ is $(n,\delta)$-\emph{separated}
if for each $x,y\in E$, $x\neq y$, there is $k\in \{0,\dots,n-1\}$ such that
$\dist(f^k(x),f^k(y))>\delta$.
Let $F\subset X$ and $s_n(F,\delta)$ denote the maximal cardinality of an $(n,\delta)$-separated subset $E\subset F$.
Since $X$ is compact, $s_n(F,\delta)$ is finite.
Let\[
 h(f,F,\delta)=\limsup_{n\to\infty}\frac 1n\log s_n(F,\delta),
\]
note that $h(f,F,\delta)$ increases as $\delta\to 0$, and define $h(f,F)=\lim_{\delta\to 0}h(f,F,\delta)$. When $F=X$ we omit $F$ in the above notation and let $s_n(\delta)=s_n(F,\delta)$, $h(f,\delta)=h(f,F,\delta)$, and define the topological entropy of $f$ as
$h(f)=h(f,X)$.
\end{definition}

\begin{lemma}\label{formula}
The following equality holds:
$$\lim_{n\to\infty}\frac{\log(\Per_n(g_A))}{n}=\log(\lambda)=h(g_A).$$
\end{lemma}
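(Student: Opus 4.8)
The plan is to treat the two equalities separately. The first, $\lim_{n\to\infty}\frac{1}{n}\log\Per_n(g_A)=\log\lambda$, is an immediate consequence of Proposition~\ref{PergA}, which gives the exact count $\Per_n(g_A)=\lambda^n+\lambda^{-n}$. Indeed, writing $\frac{1}{n}\log(\lambda^n+\lambda^{-n})=\log\lambda+\frac{1}{n}\log(1+\lambda^{-2n})$ and letting $n\to\infty$, the second term vanishes because $\lambda>1$. This step uses only the periodic point count already established in Lemma~\ref{lem:per_and_perminus} and Proposition~\ref{PergA}, with no further dynamical input.

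The substance of the lemma is the identity $h(g_A)=\log\lambda$. Here the plan is to exploit that $(\mathbb{S}^2,g_A)$ is a topological factor of the hyperbolic toral automorphism $(\mathbb{T}^2,f_A)$ through the semiconjugacy $\pi$, that is $\pi\circ f_A=g_A\circ\pi$. First I would recall the classical computation $h(f_A)=\log\lambda$ for a hyperbolic toral automorphism whose eigenvalues are $\lambda>1>\lambda^{-1}$ (the entropy equals the sum of the logarithms of the moduli of the eigenvalues lying outside the unit circle). Since topological entropy never increases under factor maps, this immediately yields the upper bound $h(g_A)\le h(f_A)=\log\lambda$.

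The reverse inequality $h(g_A)\ge\log\lambda$ is where the main care is needed, and where the non-expansiveness of $g_A$ blocks the naive argument. One might hope to bound entropy from below by the growth rate of periodic points through an $(n,\delta)$-separation argument, but this fails precisely because $g_A$ is only cw-expansive: distinct periodic points of the same period need not be $(n,\delta)$-separated at any fixed scale $\delta$. Instead the plan is to use that the fibers of $\pi$ are uniformly finite. By Proposition~\ref{PergA}(1), and more generally since $\pi^{-1}(y)\subset\{x,-x\}$, every fiber has at most two points. Bowen's inequality for factor maps, $h(f_A)\le h(g_A)+\sup_{y}h\big(f_A,\pi^{-1}(y)\big)$, then applies, and the fiber-entropy term vanishes because any $(n,\delta)$-separated subset of a set with at most two elements has at most two elements, so $\frac{1}{n}\log 2\to 0$. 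Hence $\log\lambda=h(f_A)\le h(g_A)$, which together with the upper bound gives equality.

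The one point to verify with the correct quantifiers is the vanishing of this fiber entropy, i.e. the standard fact that a factor map with uniformly bounded finite fibers preserves topological entropy; this is where I expect the only genuine subtlety to lie, the remaining estimates being routine. Once $h(g_A)=\log\lambda$ is secured, combining it with the first computation closes the chain $\lim_{n\to\infty}\frac{1}{n}\log\Per_n(g_A)=\log\lambda=h(g_A)$, completing the proof.
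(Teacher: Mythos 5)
Your proposal is correct and follows essentially the same route as the paper: the first equality is read off from the exact count $\Per_n(g_A)=\lambda^n+\lambda^{-n}$ of Proposition \ref{PergA}, and the identity $h(g_A)=h(f_A)=\log\lambda$ is obtained by combining the factor inequality $h(g_A)\le h(f_A)$ with the Bowen/Ledrappier--Walters inequality (the paper cites (1.1) of \cite{LW}) and the vanishing of the fiber entropy, since $\#\pi^{-1}(y)\le 2$. Your write-up is merely more explicit about the classical fact $h(f_A)=\log\lambda$ and about why finite fibers force zero fiber entropy, which the paper leaves implicit.
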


\begin{proof}
The first equality is a direct consequence of Lemma \ref{PergA}:
%We begin with the equality in Proposition \ref{PergA}, apply $\log$, divide by $n$, and take the limit when $n\to\infty$ to obtain
$$\lim_{n\to\infty}\frac{\log(\Per_n(g_A))}{n}=\lim_{n\to\infty}\frac{\log(\lambda^n+\lambda^{-n})}{n}=\log(\lambda).$$
In the second equality, we prove that the topological entropy of $g_A$ also equals $\log(\lambda)$. Indeed, $g_A$ being a factor of $f_A$ implies $h(g_A)\leq h(f_A)$, while the quotient map satisfying $\#\pi^{-1}(y)\leq2$ for every $y\in\mathbb{S}^2$ implies $h(f_A,\pi^{-1}(y))=0$ for every $y\in\mathbb{S}^2$ and (1.1) in \cite{LW} ensures that $h(f_A)\leq h(g_A)$. This ensures both equalities and finishes the proof.
\end{proof}

\begin{proposition}\label{+-separated}
Let $\eps\in(0,\frac{1}{4})$ be an expansive constant of $f_A$. Then the sets $P_n(f_A)$ and $P^{-}_n(f_A)$ are $(n,\eps)$-separated for every $n\in\N$.
\end{proposition}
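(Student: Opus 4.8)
The plan is to prove the contrapositive formulation of $(n,\eps)$-separation: for each of the two sets, I will show that if two points $x,y$ in the set satisfy $d(f_A^k(x),f_A^k(y))\le\eps$ for all $k\in\{0,\dots,n-1\}$, then this bound in fact holds for \emph{all} $k\in\Z$, so that expansivity of $f_A$ (with $\eps$ as an expansive constant) forces $x=y$. Consequently any two distinct points of the set must be separated by more than $\eps$ at some time in $\{0,\dots,n-1\}$. The real work is therefore to upgrade a bound on the finite window $\{0,\dots,n-1\}$ to a bound on the whole orbit, and the two cases differ only in how the orbit repeats.

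For $P_n(f_A)$ the upgrade is immediate. If $f_A^n(x)=x$ and $f_A^n(y)=y$, both orbits are $n$-periodic, so for any $k\in\Z$, writing $k\equiv r\pmod n$ with $r\in\{0,\dots,n-1\}$ gives $f_A^k(x)=f_A^r(x)$ and $f_A^k(y)=f_A^r(y)$; hence $d(f_A^k(x),f_A^k(y))=d(f_A^r(x),f_A^r(y))\le\eps$, and expansivity yields $x=y$.

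For $P_n^{-}(f_A)$ I will use two structural features of $f_A$: it is induced by a \emph{linear} map, so it commutes with the antipodal map, giving $f_A^m(-x)=-f_A^m(x)$ for every $m$; and the antipodal map $x\mapsto-x$ is an isometry of the flat torus $\mathbb{T}^2$, so $d(-a,-b)=d(a,b)$. If $f_A^n(x)=-x$ and $f_A^n(y)=-y$, then $f_A^{2n}(x)=f_A^n(-x)=-f_A^n(x)=-(-x)=x$, and likewise for $y$, so both points are $2n$-periodic. Now fix $k\in\Z$ and reduce modulo $2n$ to some $s\in\{0,\dots,2n-1\}$. If $s\in\{0,\dots,n-1\}$ the bound is already given. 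If $s=n+r$ with $r\in\{0,\dots,n-1\}$, then $f_A^{s}(x)=f_A^r(f_A^n(x))=f_A^r(-x)=-f_A^r(x)$ and similarly $f_A^{s}(y)=-f_A^r(y)$, whence $d(f_A^s(x),f_A^s(y))=d(-f_A^r(x),-f_A^r(y))=d(f_A^r(x),f_A^r(y))\le\eps$. Thus the bound holds for all $k\in\Z$, and expansivity again forces $x=y$.

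The only genuinely non-formal point is the $P_n^{-}$ case, where the separation estimate on $\{0,\dots,n-1\}$ must be propagated across the second half of the $2n$-orbit; this works precisely because that second half is the antipodal reflection of the first and the antipodal map preserves distances on $\mathbb{T}^2$. I expect this to be the main (though elementary) obstacle, and it is also the reason the argument is tied to the flat metric on the torus. The hypothesis $\eps<\frac14$ is not needed for separation itself—any expansive constant suffices—but is retained for use in the subsequent entropy estimates.
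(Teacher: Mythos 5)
Your proposal is correct and follows essentially the same route as the paper: both arguments reduce the full-orbit bound to the window $\{0,\dots,n-1\}$ using $n$-periodicity for $P_n(f_A)$, and for $P_n^{-}(f_A)$ both use that $f_A^{n+i}(x)=f_A^i(-x)=-f_A^i(x)$ together with the antipodal map being an isometry of $\mathbb{T}^2$ to propagate the estimate across the second half of the $2n$-orbit before invoking expansivity. Your write-up just makes the modular-arithmetic bookkeeping and the two structural inputs (linearity and isometry) slightly more explicit than the paper does.
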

 
\begin{proof}
Expansiveness of $f_A$ ensures that $P_n(f_A)$ is $(n,\eps)$-separated and since antipodal n-periodic points are 2n-periodic, expansiveness of $f_A$ ensures that $P^{-}_n(f_A)$ is $(2n,\eps)$-separated. If $x,y\in P_n^{-}(f_A)$, i.e. $f_A^n(x)=-x$ and $f_A^n(y)=-y$, and $$d(f_A^i(x),f_A^i(y))<\eps \,\,\,\,\,\, \text{for every} \,\,\,\,\,\, i\in\{0,\ldots,n-1\},$$
then
$$
d(f_A^{n+i}(x),f_A^{n+i}(y))=d(f_A^{i}(-x),f_A^{i}(-y))=d(f_A^i(x),f_A^i(y))
$$
for every $i\in\{0,\ldots,n-1\}$ and hence 
$$d(f_A^i(x),f_A^i(y))<\eps \,\,\,\,\,\, \text{for every} \,\,\,\,\,\, i\in \Z.$$ But by expansivity this means that $x=y$.
In other words, any two distinct points $x,y\in P_n^{-}(f_A)$
%and $x\neq y$, then these points 
have to $\eps$-separate in the first $n$ iterations. 
%Now if $x\in P_n(f_A)$, $y\in P^{-}_n(f_A)$, and 
%$$d(f_A^i(x),f_A^i(y))<\eps \,\,\,\,\,\, \text{for every} \,\,\,\,\,\, i\in\{0,\ldots,n-1\},$$
%then we have
%$$
%d(f_A^{n+i}(-x),f_A^{n+i}(y))=d(f_A^{i}(-x),f_A^{i}(-y))=d(f_A^i(x),f_A^i(y))<\eps
%$$
%for every $i\in\{0,\ldots,n-1\}$. This means that the orbit of $y$ will $\eps$-shadow the orbit of $x$ till time $n$ and after that it will $\eps$-shadow the orbit of $-x$ till time $n$. Thus, if
%$$z\in W^s_c(y)\cap W^u_c(f^n(y)) \,\,\,\,\,\, \text{and} \,\,\,\,\,\, w\in W^s_c(f^n(y))\cap W^u_c(y),$$
%then $f^n(w)\in W^s_c(y)\cap W^u_c(f^n(y))$ {\red [How you get this property of $w$. Can write more detailed?]} and, hence, $z$ and $f^n(w)$ belong to the same dynamical ball. This contradicts expansiveness if $y\neq f^n(y)=-y$. If $y=-y$, then $y\in P_n(f_A)$ and expansiveness also ensure that $x$ and $y$ separate before time $n$ and concludes the proof. 
\end{proof}

If $f\colon X\to X$ is an expansive homeomorphism and satisfies the periodic specification property, then there are constants $D,E>0$ such that 
$$De^{h(f)n}\leq\Per_n(f)\leq Ee^{h(f)n}$$
for sufficiently large $n\in\N$ (see \cite[Lemma 22.6]{DGS}). Another consequence of Proposition \ref{PergA} is that similar inequalities hold for $g_A$.

\begin{lemma}
The following holds for sufficiently large $n\in\N$:
$$e^{h(g_A)n}\leq\Per_n(g_A)\leq 2e^{h(g_A)n}.$$
\end{lemma}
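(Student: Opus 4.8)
The plan is to reduce everything to the two explicit formulas already established. By Proposition~\ref{PergA} we know the exact count $\Per_n(g_A)=\lambda^n+\lambda^{-n}$, and by Lemma~\ref{formula} we have $h(g_A)=\log(\lambda)$, so that $e^{h(g_A)n}=e^{n\log(\lambda)}=\lambda^n$. The desired double inequality is therefore equivalent to
$$\lambda^n\leq \lambda^n+\lambda^{-n}\leq 2\lambda^n,$$
and each side follows from an elementary observation about $\lambda>1$.

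First I would establish the lower bound: since $\lambda^{-n}>0$ for every $n\in\N$, we immediately get $\lambda^n\leq\lambda^n+\lambda^{-n}$, which is the left-hand inequality $e^{h(g_A)n}\leq\Per_n(g_A)$. Next I would establish the upper bound: since $\lambda>1$ gives $\lambda^{2n}\geq 1$, hence $\lambda^{-n}\leq\lambda^n$, we obtain $\lambda^n+\lambda^{-n}\leq 2\lambda^n$, which is the right-hand inequality $\Per_n(g_A)\leq 2e^{h(g_A)n}$.

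Both inequalities in fact hold for every $n\geq 1$, so the hypothesis ``sufficiently large $n$'' is not actually restrictive here; it is phrased this way only to parallel the asymptotic form of the expansive estimate from \cite[Lemma 22.6]{DGS} quoted just above the statement. I do not expect any genuine obstacle in this argument: the only input demanding real work, namely the exact periodic-point count, has already been carried out in Proposition~\ref{PergA} through the determinant computation of Lemma~\ref{lem:per_and_perminus} combined with the $2$-to-$1$ projection analysis, so the present lemma is a purely arithmetic corollary of those results and of the entropy identity $h(g_A)=\log(\lambda)$.
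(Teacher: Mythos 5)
Your proposal is correct and follows essentially the same route as the paper: substitute the exact count $\Per_n(g_A)=\lambda^n+\lambda^{-n}$ from Proposition \ref{PergA} and the identity $e^{h(g_A)n}=\lambda^n$ from Lemma \ref{formula}, then observe $0<\lambda^{-n}\leq\lambda^n$. Your added remark that the bounds in fact hold for every $n\geq 1$ is a correct (minor) sharpening of the paper's ``sufficiently large $n$'' phrasing.
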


\begin{proof}
The result follows from the following equalities:
$$\Per_n(g_A)=\lambda^n+\lambda^{-n}=e^{\log(\lambda)n}+e^{-\log(\lambda)n}=e^{h(g_A)n}+e^{-h(g_A)n},$$
observing that $e^{h(g_A)n}+e^{-h(g_A)n}<2e^{h(g_A)n}$ for suffiently large $n\in\N$.
\end{proof}

In the expansive case, similar inequalities hold changing $Per_n(f)$ by $s_n(\eps)$, that is, for each $\eps\in(0,c)$ there are constants $D_{\eps},E_{\eps}>0$ such that
$$D_{\eps}e^{h(f)n}\leq s_n(\eps)\leq E_{\eps}e^{h(f)n} \,\,\,\,\,\, \text{for every} \,\,\,\,\,\, n\geq1$$
(see \cite[Lemma 22.5]{DGS}). But $g_A$ is not expansive and furthermore has horseshoes inside some dynamical balls with arbitrarily small radius (see \cite{PV} and \cite{ACCV}*{Theorem B}). As a consequence, many periodic points for $g_A$ do not separate and the relation between $\Per_n(g_A)$ and $s_n(\eps)$ is not clear. This leads to problems in the general cw-hyperbolic case. However in our context, we can use the estimatives for $f_A$ and properties of the quotient map $\pi$ to describe the measure of maximal entropy of $g_A$ and prove it can be obtained, as in the expansive case, as the weak* limit of an average of Dirac measures on periodic orbits. Let $(\mu_n)_{n\in\N}$ be the sequence of measures on $\mathbb{S}^2$ defined as follows:
$$
\mu_n=\frac{1}{\Per_n(g_A)}\sum_{p\in P_n(g_A)}\delta_p,
$$
where $\delta_p$ is the Dirac measure at $p$. The following is among the main results of this article.

\begin{theorem}\label{mme}
The sequence $(\mu_n)_{n\in\N}$ converges to a measure $\mu$ in the weak* topology and $\mu$ is the unique measure of maximal entropy for $g_A$.
\end{theorem}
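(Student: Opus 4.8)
The plan is to guess the limit explicitly and transport everything from the torus, where $f_A$ is a genuinely hyperbolic system whose statistics are classical. Let $\nu$ be the normalized Haar (Lebesgue) measure on $\mathbb{T}^2$; it is the unique measure of maximal entropy of the hyperbolic toral automorphism $f_A$, with $h_\nu(f_A)=\log\lambda=h(f_A)$. I claim that the measure in the statement is $\mu:=\pi_*\nu$, and I would prove the theorem by (i) showing $\mu_n\to\mu$ weak$^*$ and (ii) showing $\mu$ is the unique measure of maximal entropy of $g_A$, reducing both to known facts about $f_A$ through the semiconjugacy $\pi$.

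For the convergence I first establish equidistribution of the two families of periodic orbits on $\mathbb{T}^2$. Viewing $P_n(f_A)=\ker(\overline{A^n-\mathrm{Id}})$ and $P_n^-(f_A)=\ker(\overline{A^n+\mathrm{Id}})$ as finite subgroups of $\mathbb{T}^2$, I would test the normalized counting measures $\nu_n$ and $\nu_n^-$ against a nontrivial character $\chi_k(x)=e^{2\pi i\langle k,x\rangle}$, $k\in\mathbb{Z}^2\setminus\{0\}$. On a finite group a nontrivial character sums to zero, and $\chi_k$ is trivial on $\ker(\overline{A^n\mp\mathrm{Id}})$ precisely when $(A^n\mp\mathrm{Id})^{-T}k\in\mathbb{Z}^2$. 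Decomposing $k$ in the eigenbasis of $A^T$ and using that no nonzero integer vector is parallel to the irrational expanding eigendirection, the vector $(A^n\mp\mathrm{Id})^{-T}k$ converges, as $n\to\infty$, to a nonzero multiple of the contracting eigendirection of $A^T$; since that direction is irrational the limit is not a lattice point, so for all large $n$ the character sum vanishes. By Stone--Weierstrass this yields $\nu_n\to\nu$ and $\nu_n^-\to\nu$ weak$^*$.

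To pass to $\mathbb{S}^2$, note that for $\phi\in C(\mathbb{S}^2)$ the pullback $\phi\circ\pi$ is continuous on $\mathbb{T}^2$, and Proposition~\ref{PergA}, which says that $\pi\colon P_n(f_A)\cup P_n^-(f_A)\to P_n(g_A)$ is $2$-to-$1$, gives
\[
\int\phi\,d\mu_n=\frac{\Per_n(f_A)\int\phi\circ\pi\,d\nu_n+\Per_n^-(f_A)\int\phi\circ\pi\,d\nu_n^-+O(1)}{\Per_n(f_A)+\Per_n^-(f_A)},
\]
where the $O(1)$ term collects the four points lying in both $P_n(f_A)$ and $P_n^-(f_A)$ and is negligible once divided by $\Per_n(g_A)\to\infty$. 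Both integrals tend to $\int\phi\circ\pi\,d\nu$ and the weights are comparable (each $\sim\lambda^n$), so the right-hand side converges to $\int\phi\,d\mu$, proving $\mu_n\to\mu$. For maximality and uniqueness I would use that $\pi$ has fibers of cardinality at most $2$: a finite-to-one factor map preserves measure-theoretic entropy (the relative entropy over $\pi^{-1}\mathcal{B}_{\mathbb{S}^2}$ vanishes by Abramov--Rokhlin), so $h_\mu(g_A)=h_\nu(f_A)=\log\lambda=h(g_A)$ by Lemma~\ref{formula}, and $\mu$ is a measure of maximal entropy. If $\mu'$ is any measure of maximal entropy for $g_A$, lift it to an $f_A$-invariant $\nu'$ with $\pi_*\nu'=\mu'$ (a weak$^*$ limit of Cesàro averages of $f_A$-pushforwards of any measure projecting to $\mu'$); entropy preservation then forces $h_{\nu'}(f_A)=\log\lambda=h(f_A)$, so $\nu'$ is a measure of maximal entropy of $f_A$, hence $\nu'=\nu$ by uniqueness on the torus, and therefore $\mu'=\pi_*\nu'=\mu$.

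The main obstacle I anticipate is twofold. First, the equidistribution of the twisted family $\nu_n^-$: these points form a sparse subset of $P_{2n}(f_A)$, so their equidistribution cannot simply be inherited from that of ordinary periodic points and genuinely requires the character computation above. Second, the vanishing of the relative (fiber) entropy of the $2$-to-$1$ extension, which is what simultaneously makes $\mu$ maximal and reduces uniqueness on $\mathbb{S}^2$ to the classical uniqueness on $\mathbb{T}^2$. Everything else is bookkeeping with the explicit counts from Lemma~\ref{lem:per_and_perminus} and Proposition~\ref{PergA}.
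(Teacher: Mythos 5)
Your proposal is correct, but it reaches the theorem by a genuinely different route than the paper. For the convergence $\mu_n\to\mu$, the paper follows Bowen: it removes the four spines to form $P_n^*(g_A)$, lifts the resulting measures to measures $\nu_n$ on $P_n^*(f_A)\subset P_n(f_A)\cup P_n^-(f_A)$, and shows that any weak* limit of $(\nu_n)$ is a \emph{homogeneous} measure for $f_A$ --- using the periodic specification property of $f_A$ for the lower bound on $\nu_r(B_n(x,\eps))$ and the $(n,\eps)$-separation of $P_n(f_A)$ and $P_n^-(f_A)$ (Proposition \ref{+-separated}) for the upper bound --- so that the limit must be the unique MME $\eta$ of $f_A$. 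You instead identify $\eta$ explicitly as Haar measure and prove equidistribution of both $P_n(f_A)=\ker(\overline{A^n-\mathrm{Id}})$ and the twisted family $P_n^-(f_A)=\ker(\overline{A^n+\mathrm{Id}})$ by character sums; this is sound (the limit $c_sv_s$ of $(A^n\mp\mathrm{Id})^{-T}k$ is nonzero because no nonzero integer vector lies in the expanding eigendirection, and is not a lattice point because the contracting eigendirection is irrational), and your $O(1)$ error term absorbs the spine bookkeeping that forces the paper to introduce $P_n^*$ and $\Per_n^*$. For uniqueness, the paper lifts an arbitrary MME $\hat\mu$ of $g_A$ through periodic-orbit measures supplied by the periodic specification property of $g_A$ (Theorem \ref{thm:perspec}, hence Theorem \ref{thm:tracing}), whereas you lift by Ces\`aro-averaging any measure projecting to $\hat\mu$; both are valid, and in fact your argument bypasses Section 2 entirely. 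The trade-off: your proof is more elementary and self-contained but is tied to the algebraic structure of the toral quotient, while the paper's homogeneity/specification argument is deliberately ``soft'' so as to probe which parts of Bowen's machinery might survive in the general cw-hyperbolic setting, which is the theme of its Sections 3 and 5.
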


Before proceeding to the proof, we recall Bowen's definition of a homogeneous measure. Its importance relies in the fact that homogeneous measures are measures of maximal entropy (see \cite{DGS}*{Proposition 19.7}). 

\begin{definition}[Homogeneous measure]
Let $f\colon X\to X$ be a continuous map and $\nu$ be a probability measure on $X$. Considering the Bowen's dynamical ball of radius $c$ and time $n$,
$$B_n(x,c)=\{y\in X; \,\,d(f^k(y),f^k(x))\leq c \,\,\, \text{for every} \,\,\, k\in\{0,\dots,n-1\}\},$$
we say that $\nu$ is a \emph{homogeneous measure} if for each $\eps>0$ there are $\delta>0$ and $K>0$ such that for every $x,y$ we have
$$
\nu (B_n(y,\delta))\leq K \nu(B_n(x,\eps)).
$$
\end{definition}

\begin{proof}[Proof of Theorem \ref{mme}]
First, note that $f_A$ has a unique measure of maximal entropy $\eta$ and its push forward measure $\mu=\pi_*\eta$ is a measure of maximal entropy of $g_A$, because $\pi$ is bounded to one which guarantees $h_\mu(g_A)=h_\eta(f_A)$ (e.g. see \cite{LW}) and, hence,  $h(g_A)=h(f_A)=h_\eta(f_A)=h_\mu(g_A)$ (see \cite{DGS} for the definition of metric entropy).

Next let us prove the uniqueness of the measure of maximal entropy of $g_A$. To do so, let us fix any  measure of maximal entropy $\hat\mu$  for $g_A$. Theorem~\ref{thm:perspec} implies that $g_A$ has the periodic specification property, which ensures the existence of a sequence $\hat{\mu}_n$ of measures supported on periodic points $p_n$ such that $\hat\mu$ is the weak* limit of $\hat{\mu}_n$ (see \cite[Proposition~21.8]{DGS}). Let $q_n$ be any periodic point $q_n\in \pi^{-1}(p_n)$ and let $\hat{\eta}_n$ be the measure supported on $q_n$. Then $\hat{\mu}_n=\pi_*(\hat{\eta}_n)$. Without loss of generality we may assume that $(\hat{\eta}_n)_{n\in\N}$ converge in the weak* topology to a measure $\hat{\eta}$. But then, by continuity of the push-forward operator $\pi_*$ we have $\pi_*\hat{\eta}=\hat\mu$ and as a consequence
$$
h(g_A)=h_{\hat\mu}(g_A)\leq h_{\hat{\eta}}(f_A)\leq h(f_A)=h(g_A).
$$
This proves that $\hat{\eta}=\eta$ and consequently that $\hat{\mu}=\mu$, that is, $\mu$ is the unique measure of maximal entropy of $g_A$.

It remains to prove that $\mu$ is the weak* limit of $\mu_n$. A difficulty that arise is how to lift precisely the sequence $(\mu_n)_{n\in\N}$ to the Torus since the sequence $(\eta_n)_{n\in\N}$ defined by
$$
\eta_n=\frac{1}{\Per_n(f_A)}\sum_{p\in P_n(f_A)}\delta_p,
$$
which converge to $\eta$ by Bowen's proof, does not project to $(\mu_n)_{n\in\N}$. Indeed, the antipodal periodic points are also in the pre-image of periodic points of $g_A$. An attempt would be to include the antipodal periodic points in the definition of these measures and consider the sequence $(\hat{\eta}_n)_{n\in\N}$ defined by
$$
\hat{\eta}_n=\frac{1}{\Per_n(f_A)+\Per^{-}_n(f_A)}\sum_{p\in P_n(f_A)\cup P^{-}_n(f_A)}\delta_p.
$$
But $\hat{\eta}_n$ still does not project to $\mu_n$ and the problem relies on the existence of the spines (points with a single pre-image). Thus, we rule out these points as follows: for each $n\in\N$, let 
$$P_n^*(g_A)=\{p\in P_n(g_A): \# \pi^{-1}(p)=2\}, \,\,\,\,\,\, \Per_n^*(g_A)=\# P_n^*(g_A),$$
$$
\text{and} \,\,\,\,\,\, \hat{\mu}_n=\frac{1}{\Per^*_n(g_A)}\sum_{p\in P^*_n(g_A)}\delta_p.
$$
Since there are only four points with a single pre-image, we have 
$$|\Per_n(g_A)-\Per^*_n(g_A)|\leq 4.$$ Thus, $(\mu_{n})_{n\in\N}$ and $(\hat{\mu}_{n})_{n\in\N}$ converge weakly* to exactly the same measure, provided the limit exists. For each $n\in\N$, let 
$$P_n^*(f_A)=\pi^{-1}(P_n^*(g_A)) \,\,\,\,\,\, \text{and} \,\,\,\,\,\, \Per_n^*(f_A)=\#P^*_n(f_A)=2\Per^*_n(g_A),$$ and note that for $n$ sufficiently large we have
$$\frac{\Per_n(f_A)}{2}\leq\Per^*_n(f_A)\leq 3\Per_n(f_A).$$
%since..
%$\Per_n(f_A)-\# (P_n(f_A)\cap P^*_n(f_A))\leq 4$ and 
%so $\Per^*_n(f_A)\geq \Per_n(f_A)/2$ provided $n$ is large enough. By Lemma~\ref{lem:per_and_perminus} and Proposition \ref{PergA} we get for large $n$ that 
%$\Per^*_n(f_A)\leq 3\Per_n(f_A)$.
Now we can lift $(\hat{\mu}_{n})_{n\in\N}$ to a sequence $(\nu_{n})_{n\in\N}$ defined by
$$
\nu_n=\frac{1}{\Per^*_n(f_A)}\sum_{p\in P^*_n(f_A)}\delta_p\label{eq:pern*mun},
$$
obtaining that $\hat{\mu}_n=\pi_*(\nu_n)$ for every $n\in\N$. Indeed, for each $E\subset\mathbb{S}^2$ we have
\begin{eqnarray*}
\pi_{*}(\nu_n)(E)&=&\frac{\#(\pi^{-1}(E)\cap P_n^*(f_A))}{\Per_n^*(f_A)}\\
&=&\frac{2\#(E\cap P_n^*(g_A))}{2\Per_n^*(g_A)}\\
&=&\hat{\mu}_n(E).
\end{eqnarray*}

Let $\nu$ be a weak*-limit of the sequence $(\nu_n)_{n\in\N}$. We will prove that $\nu$ is a homogeneous measure and, consequently, a measure of maximal entropy. This will ensure that $\nu=\eta$, since $\eta$ is the unique measure of maximal entropy of $f_A$, and that $(\nu_n)_{n\in\N}$ converges to $\eta$ in the weak*-topology. The continuity of the push-forward operator $\pi_*$ will then ensure that $(\hat{\mu}_n)_{n\in\N}$ will converge to $\mu$ in the weak*-topology, and consequently that $(\mu_n)_{n\in\N}$ will converge to $\mu$ in the weak*-topology, concluding the proof.

To prove that $\nu$ is homogeneous,
%{\red How do you check that $x\in P_n(f_A)$ and $y\in P^{-}_n(f_A)$
%have to $(n,\eps)$ separate? If want to repeat the above, then we have:
%$$
%d(f_A^{n+i}(x),f_A^{n+i}(y))=d(f_A^{i}(x),f_A^{i}(-y))=d(f_A^i(x),-f_A^i(y)).
%$$
%Do not see the argument.}
for each $n,r\in\N$ and $x\in\mathbb{T}^2$, we estimate $\nu_r(B_n(x,\eps))$ as follows: let $M$ be the constant given by the periodic specification property of $f_A$ for $\eps$, write $r=n+m+2M$, and note that (as in the proof of \cite{DGS}*{Proposition 22.8}) the periodic specification ensures that
$$\#(B_n(x,\eps)\cap P_r^*(f_A))\geq s_m(3\eps),$$
which, in turn, implies
\begin{eqnarray*}
\nu_r(B_n(x,\eps))&=&\frac{\#(B_n(x,\eps)\cap P_r^*(f_A))}{\Per_r^*(f_A)}\\
&\geq&\frac{s_m(3\eps)}{3\Per_r(f_A)}\\
&\geq&3^{-1}E^{-1}D_{3\eps}e^{h(f_A)m}e^{-h(f_A)r}\\
&=&A_{\eps}e^{-h(f_A)n},
\end{eqnarray*}
where $A_{\eps}=3^{-1}E^{-1}D_{3\eps}e^{-h(f_A)2M}$. Since $\nu$ is the limit of a subsequence $(\nu_r)_{r\in S}$, it follows that
$$\nu(B_n(x,\eps))\geq\limsup_{r\in S}\nu_r(B_n(x,\eps))\geq A_{\eps}e^{-h(f_A)n}.$$

Now assume that $\eps$ was chosen such that $3\eps$ is an expansive constant of $f_A$ and note that since $P_r^*(f_A)\subset P_r(f_A)\cup P^{-}_r(f_A)$ and the sets $P_r(f_A)$ and $P^{-}_r(f_A)$ are $(r,3\eps)$-separated (see Proposition \ref{+-separated}), we have that any $x,z\in B_n(y,3\eps)\cap P_r(f_A)$ (and any $x,z\in B_n(y,3\eps)\cap P_r^-(f_A)$) 
are such that $f_A^n(x)$ and $f_A^n(z)$ are $(r-n,3\eps)$-separated. This implies that
\begin{eqnarray*}\#(B_n(y,3\eps)\cap P_r^*(f_A))
&\leq& \#(B_n(y,3\eps)\cap P_r(f_A))+
\#(B_n(y,3\eps)\cap P^-_r(f_A))\\
&\leq& 2s_{r-n}(3\eps)
\end{eqnarray*}
and consequently,
\begin{eqnarray*}
\nu_r(B_n(y,3\eps))&=&\frac{\#(B_n(y,3\eps)\cap P_r^*(f_A))}{\Per_r^*(f_A)}\\
&\leq&\frac{4s_{r-n}(3\eps)}{\Per_r(f_A)}\\
&\leq&4D^{-1}E_{3\eps}e^{h(f_A)(r-n)}e^{-h(f_A)r}\\
&=&B_{\eps}e^{-h(f_A)n},
\end{eqnarray*}
where $B_{\eps}=4D^{-1}E_{3\eps}$.
%(deleted $M$ from the above; not sure where and why we need it.)
%Now we choose $\delta\in(0,\frac{\eps}{6})$ as in the proof of \cite{DGS}*{Lemma 22.9} and note that since $P_r^*(f_A)\subset P_r(f_A)\cup P^{-}_r(f_A)$ and $P_r(f_A)\cup P^{-}_r(f_A)$ is $(r,\eps)$-separated (see Proposition \ref{+-separated}), any $x,z\in B_n(y,\delta)\cap P_r^*(f_A)$ are such that $T^{n+M}(x)$ and $T^{n+M}(z)$ are $(m,\eps)$-separated. This ensures that
%$$\#(B_n(y,3\delta)\cap P_r^*(f_A))\leq s_m(\eps)$$ and consequently that
%\begin{eqnarray*}
%\nu_r(B_n(y,3\delta))&=&\frac{\#(B_n(y,3\delta)\cap P_r^*(f_A))}%{\Per_r^*(f_A)}\\
%&\leq&\frac{2s_m(\eps)}{\Per_r(f_A)}\\
%&\leq&2D^{-1}E_{\eps}e^{h(f_A)m}e^{-h(f_A)r}\\
%&=&B_{\eps}e^{-h(f_A)n},
%\end{eqnarray*}
%where $B_{\eps}=2D^{-1}E_{\eps}e^{-h(f_A)2M}$. 
For each $n\in\N$ consider the open set $$V_n=\{x\in\mathbb{T}^2; \,\, d(f_A^i(x),f^i_A(y))<2\eps \,\, \text{for every} \,\, i\in\{0,\ldots,n\}\}$$
and note that $B_n(y,\eps)\subset V_n\subset B_n(y,3\eps)$, which ensures that $\nu_r(V_n)\leq B_{\eps}e^{-h(f_A)n}$.
Since $V_n$ is an open set, it follows that
\begin{eqnarray*}
\nu(B_n(y,\eps))\leq\nu(V_n)&\leq& \liminf_{r\in S} \nu_{r}(V_n)\\
&\leq& \liminf_{r\in S} \nu_{r}(B_n(y,3\eps))\leq B_{\eps}e^{-h(f_A)n}.
\end{eqnarray*}
Thus, we proved that for each $\eps>0$ there is $K=B_{\eps}A_{\eps}^{-1}>0$ such that for every $x,y$ we have
$$
\nu (B_n(y,\eps))\leq K \nu(B_n(x,\eps)).
$$
In particular, $\nu$ is a homogeneous measure for $f_A$. This completes the proof.
\end{proof}

\section{Measure of Maximal Entropy on the Sierpi\'nski Carpet}

In this section, we exhibit the measure of maximal entropy for a homeomorphism on the Sierpi\'nski Carpet defined in \cite{BO}. First, we need to briefly recall after \cite{BO} how this map is constructed. We begin considering the pseudo-Anosov diffeomorphism $g_A\colon \mathbb{S}^2\to\mathbb{S}^2$ and follow the construction in \cite{BO} described in what follows. Let $\SC=\{x_1,...,x_4\}$ be the spines of $g_A$ and $\SP$ be a sequence of periodic points of $g_A$ satisfying:
\begin{enumerate}
\item $\SP\cap\SC=\emptyset$,
\item $\SP$ is dense on $\mathbb{S}^2$,
\item $\Per(g_A)\setminus \SP$ is dense on $\mathbb{S}^2$.
\end{enumerate}
Let $\SO=\{O_1,O_2,...\}$ be the sequence of periodic orbits of the points in $\SP$. We assume that these orbits have pairwise distinct periods. Write $S_0=\mathbb{S}^2$, $g_0=g_A$, and $O_1=\{p_1,g_0(p_1),...,g_0^{n_1-1}(p_1)\}$.
%Next, we will modify the dynamics of $g_A$ on each periodic orbit $O_i$ to create a new homeomorphism.
Following \cite{BO} we can blow-up each point $g_0^i(p_1)$ into a circle $C_{1,i}$. Precisely, there is a pair $(S_1,\pi_1:S_1\to S_0)$ such that $\pi_1^{-1}(g_0^{i}(p_1))$ is a circle $C_{1,i}$ of radial directions centered at $g_0^{i}(p_1)$ and the collapsing map $\pi_1$, when restricted to  
$S_1\setminus (C_{1,0}\cup\dots\cup C_{1,n_1-1})$,
is a homeomorphism onto $\mathbb{S}^2\setminus O_1$. Moreover, the map $g_0$ induces a map $g_1:S_1\to S_1$, so that:
\begin{itemize}
    \item $g_0\circ \pi_1=\pi_1\circ g_1$ %for every $x\in S_1\setminus \cup_{i=0}^{n_1-1} C_{1,i}$.
    \item $g_1$ maps $C_{1,i}$ homeomorphically into $C_{1,i+1}$ for every $i\in\{0,...,n_1-2\}$, and $C_{1,n_1-1}$ into $C_{1,0}$. 
\end{itemize} 
In summary, we removed the periodic orbit $O_1$ for $g_0$ and replaced it by the periodic circles $(C_{1,i})_{i=0}^{n_1-1}$ for $g_1$. Inductively, we obtain for each $k\geq 1$ a blowup $(S_k,\pi_k:S_k\to S_{k-1})$ and a homeomorphism $g_k\colon S_k\to S_{k}$ as follows. Once we have defined $S_{k-1}$, $\pi_{k-1}$, and  $g_{k-1}$, we obtain $(S_k,\pi_k)$ and $g_{k}$ by blowing-up (as above) the orbit $O_{k}$ into periodic circles $(C_{k,i})_{i=0}^{n_k-1}$ for $g_k$.
%so that $S_k\subset S_{k-1}$ and $g_{k}|_{S_{k-1}}=g_{k-1}$.
In this way, we have $\pi_k\circ g_k=g_{k-1}\circ \pi_{k}$ for every $k\geq 1$.
Each map $\pi_k$ is invertible everywhere except circles $(C_{k,i})_{i=0}^{n_k-1}$ which are collapsed to points forming periodic orbit $O_k$ for $g_{k-1}$.

Define the inverse limit space $$S_{\infty}=\{(y_k)\in\Pi_{k=0}^{\infty} S_k \; ;\; \pi_k(y_k)=y_{k-1}, \forall k\geq 1\}.$$
Note that we can view each $S_k\subset \mathbb{S}^2$ and $\pi_k\colon \mathbb{S}^2\to \mathbb{S}^2$, extending it in a natural way to ``holes'' complementing $S_k$ in $\mathbb{S}^2$. By results of Whyburn \cite{Wyburn}, the space $S_{\infty}$ is then a Sierpi\'nski carpet (see \cite{BO} for details). Define the map $G_A\colon S_{\infty}\to S_{\infty}$ by $$G_A((y_k)_{k\geq 0})=(g_k(y_k))_{k\geq 0}.$$
We will also need a standard result on measures induced on inverse limits (see \cite[Chapter V, Theorem~3.1]{Parth}) which we adapt to our context. Let $\eta_k\colon S_\infty \to S_k$ be the natural projection onto $k$-th coordinate.

\begin{theorem}\cite{Parth}\label{thm:induced}
If $\mu_k$ are Borel measures defined on $S_k$ such that for any Borel set $A\in \mathcal{B}(S_{k-1})$ we have
$\mu_{k-1}(A)=\mu_{k}(\pi_k^{-1}(A))$, then there exists a unique Borel measure $\nu$ on $S_\infty$ such that
$\nu(\eta_k^{-1}(A))=\mu_k(A)$ for every $k\in\N$ and every Borel set $A\in \mathcal{B}(S_{k})$.
\end{theorem}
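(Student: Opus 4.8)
The plan is to build $\nu$ by the classical Carathéodory extension procedure on the algebra of cylinder sets, using the compatibility hypothesis to guarantee consistency and the compactness of the factors $S_k$ (hence of $S_\infty$) to guarantee countable additivity. Write $\pi_j^k=\pi_{j+1}\circ\cdots\circ\pi_k\colon S_k\to S_j$ for $j\le k$, so that $\eta_j=\pi_j^k\circ\eta_k$, and let $\mathcal{A}=\bigcup_{k}\eta_k^{-1}(\mathcal{B}(S_k))$. Because $\eta_k^{-1}(A)=\eta_{k+1}^{-1}(\pi_{k+1}^{-1}(A))$, the sets $\eta_k^{-1}(\mathcal{B}(S_k))$ form an increasing chain of $\sigma$-algebras on $S_\infty$, so their union $\mathcal{A}$ is an algebra; since each $\eta_k$ is continuous and the $S_k$ are second countable, $\mathcal{A}$ generates the Borel $\sigma$-algebra of $S_\infty$. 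On $\mathcal{A}$ I define $\nu_0(\eta_k^{-1}(A))=\mu_k(A)$.

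First I would verify that $\nu_0$ is well defined and finitely additive. Because the bonding maps $\pi_k$ (the collapsing maps) are surjective, so are the projections $\eta_k$; thus at each fixed level the assignment $B\mapsto\eta_k^{-1}(B)$ is injective and $\nu_0$ is unambiguous there. Consistency across levels is exactly the compatibility hypothesis: if a cylinder is represented at two levels $j\le k$, rewriting $\eta_j^{-1}(A)=\eta_k^{-1}((\pi_j^k)^{-1}(A))$ and iterating $\mu_{k-1}(\cdot)=\mu_k(\pi_k^{-1}(\cdot))$ gives $\mu_j(A)=\mu_k((\pi_j^k)^{-1}(A))$, so both representations receive the same value. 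Finite additivity is then immediate: a finite disjoint family of cylinders lifts to a common level $k$, where the base sets are disjoint by injectivity and additivity is inherited from $\mu_k$. In particular the total mass $\nu_0(S_\infty)=\mu_k(S_k)$ is finite and independent of $k$.

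The main obstacle is upgrading finite additivity to countable additivity, equivalently continuity of $\nu_0$ at the empty set. Suppose toward a contradiction that $E_n\in\mathcal{A}$ are decreasing with $\bigcap_n E_n=\emptyset$ but $\nu_0(E_n)\ge\eps>0$ for all $n$; after relabelling, write $E_n=\eta_{k_n}^{-1}(A_n)$ with $k_n$ strictly increasing. Each $S_{k_n}$ is compact metric, so $\mu_{k_n}$ is inner regular and I may choose compact $K_n\subset A_n$ with $\mu_{k_n}(A_n\setminus K_n)<\eps\,2^{-n-1}$. Put $F_n=\eta_{k_n}^{-1}(K_n)$ and $G_n=F_1\cap\cdots\cap F_n$; the $G_n$ are decreasing and, since each $\eta_{k_n}$ is continuous and each $K_n$ compact, each $G_n$ is a closed, hence compact, subset of $S_\infty$. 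Using $E_n\subset E_i$ for $i\le n$ one gets $E_n\setminus G_n\subset\bigcup_{i\le n}(E_i\setminus F_i)$, whence $\nu_0(E_n\setminus G_n)\le\sum_{i\ge1}\eps\,2^{-i-1}<\eps/2$ and therefore $\nu_0(G_n)>\eps/2>0$, so $G_n\ne\emptyset$. A decreasing sequence of nonempty compact subsets of the compact space $S_\infty$ has nonempty intersection, so $\bigcap_n G_n\ne\emptyset$, contradicting $\bigcap_n G_n\subset\bigcap_n E_n=\emptyset$. Hence $\nu_0$ is countably additive, and Carathéodory's theorem yields a finite Borel measure $\nu$ on $S_\infty$ extending $\nu_0$, which by construction satisfies $\nu(\eta_k^{-1}(A))=\mu_k(A)$.

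Finally, uniqueness follows because $\mathcal{A}$ is a $\pi$-system (indeed an algebra) generating $\mathcal{B}(S_\infty)$ on which any competing extension has the same finite total mass; the $\pi$--$\lambda$ theorem then forces the two measures to coincide on all Borel sets. I expect the genuinely delicate point to be the continuity-at-$\emptyset$ step, and specifically the simultaneous regularization producing decreasing compact cylinders $G_n$ of almost full measure: this is exactly where compactness of the factor spaces $S_k$ and the inverse-limit (rather than arbitrary product) structure are used, whereas the remaining steps are formal consequences of the compatibility relation $\mu_{k-1}=(\pi_k)_*\mu_k$.
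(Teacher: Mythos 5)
The paper does not actually prove this statement---it is quoted from Parthasarathy's book---and the proof there is the same standard Kolmogorov-type extension argument you give: a finitely additive set function on the algebra of cylinder sets, inner regularity of the $\mu_k$ on the compact factors combined with compactness of $S_\infty$ to obtain continuity at $\emptyset$, and then Carath\'eodory plus the $\pi$--$\lambda$ theorem. Your write-up is correct; the only points worth making explicit are that the $\mu_k$ must be finite (as they are in the paper's application) for the regularity and uniqueness steps, and that surjectivity of the projections $\eta_k$---which follows from surjectivity of the bonding maps $\pi_k$ by a compactness argument---is exactly what makes $\nu_0$ well defined at a fixed level.
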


Similarly to Theorem~\ref{mme} define
$$
\nu_n=\frac{1}{\Per_n(G_A)}\sum_{p\in P_n(G_A)}\delta_p.
$$
The following is the main result of this section.

\begin{theorem}
The homeomorphism $G_A$ of Sierpi\'nski carpet admits the unique measure of maximal entropy $\nu$.
Furthermore, the sequence $(\nu_n)_{n\in\N}$ converges to the measure $\nu$.

\end{theorem}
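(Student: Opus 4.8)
The plan is to transfer both the existence/uniqueness of the measure of maximal entropy and the convergence of empirical periodic measures from the pseudo-Anosov map $g_A$ up to the Sierpi\'nski carpet homeomorphism $G_A$, exploiting the fact that the blow-up construction changes the dynamics only on a countable (entropy-zero) set of periodic orbits. First I would establish that $G_A$ is a bounded-to-one extension of $g_A$ whose entropy is preserved. The natural projection $\eta_0\colon S_\infty\to S_0=\mathbb{S}^2$ (reading the zeroth coordinate) satisfies $\eta_0\circ G_A=g_A\circ\eta_0$, and $\eta_0$ is a bijection off the countable union of the blown-up circles; over each such point the fiber is at most a circle, so the fibers carry no topological entropy. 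By the Ledrappier--Walters variational principle \cite{LW} (already invoked in the proof of Lemma~\ref{formula}), $h(G_A)=h(g_A)=\log\lambda$, and for any $G_A$-invariant measure $\widehat\nu$ one has $h_{\widehat\nu}(G_A)=h_{(\eta_0)_*\widehat\nu}(g_A)$ whenever $\widehat\nu$ gives zero mass to the (countable, invariant) set $\Gamma=\bigcup_{k,i}C_{k,i}$ of blown-up circles.

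Next I would handle \emph{existence and uniqueness}. For existence, push the unique measure $\mu$ of maximal entropy of $g_A$ up to $S_\infty$: since $\mu$ is non-atomic (being the measure-theoretic image of Lebesgue on $\mathbb{T}^2$, it gives no mass to the countable set $\SP$ collapsed to the circles), the section $\eta_0^{-1}$ is defined $\mu$-almost everywhere and yields a $G_A$-invariant lift $\nu$ with $(\eta_0)_*\nu=\mu$ and $\nu(\Gamma)=0$, whence $h_\nu(G_A)=h_\mu(g_A)=h(G_A)$. For uniqueness, take any measure of maximal entropy $\widehat\nu$ for $G_A$. The key point is that $\widehat\nu(\Gamma)=0$: the restriction of $G_A$ to the invariant set $\Gamma$ is a countable union of finite cyclic systems of circle rotations, which has zero entropy, so an ergodic component supported on $\Gamma$ cannot be maximal; hence any maximizing measure is supported off $\Gamma$. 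Then $(\eta_0)_*\widehat\nu$ is a measure of maximal entropy for $g_A$, so it equals $\mu$ by Theorem~\ref{mme}, and since $\eta_0$ is injective off $\Gamma$ this forces $\widehat\nu=\nu$.

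For the \emph{convergence} statement I would argue directly on periodic points. A periodic point of $G_A$ either projects to a non-spine periodic point of $g_A$ lying off $\SP$ (a genuine lift through the bijective part of $\eta_0$) or arises from the finitely-many rotation circles of some fixed period. For each fixed $n$ the circles contribute only finitely many (in fact, the circles are periodic but carry a continuum of points, so one must check that $P_n(G_A)$ counts only the finitely many periodic points on each relevant circle, coming from the rational rotation number) periodic points, and these form a set of size negligible compared to $\Per_n(g_A)\sim\lambda^n$. Consequently $(\eta_0)_*\nu_n$ differs from the measure $\mu_n$ of Theorem~\ref{mme} only by a vanishing fraction of atoms, so $(\eta_0)_*\nu_n\to\mu$ weak$^*$ by Theorem~\ref{mme}. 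To upgrade this to $\nu_n\to\nu$ on $S_\infty$ I would use Theorem~\ref{thm:induced}: it suffices to test convergence against the projections $\eta_k$, and at each finite level the same counting argument (the blow-ups up to level $k$ remove only finitely many orbits per period) shows $(\eta_k)_*\nu_n$ converges to the level-$k$ marginal of $\nu$; compatibility of these marginals identifies the limit as $\nu$ itself.

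The hard part will be the bookkeeping of periodic points under the blow-up, precisely the counting needed in the convergence step. One must verify that replacing a periodic orbit $O_k$ by a cycle of circles does not spawn enough new periodic points to distort the empirical measures: each periodic circle contributes periodic points only at finitely many angles (those fixed by the induced circle map), and one needs uniform control so that the total count of ``circle-born'' periodic points of period $n$ stays $o(\lambda^n)$, or more delicately that their \emph{distribution} does not accumulate mass anywhere in the limit. A clean way to sidestep atom-by-atom estimates is to show $\widehat\nu(\Gamma)=0$ for the limit and that the mass $\nu_n(\Gamma)\to 0$, reducing everything to the already-established convergence $(\eta_0)_*\nu_n\to\mu$ together with the injectivity of $\eta_0$ off $\Gamma$ and the measure-extension Theorem~\ref{thm:induced}.
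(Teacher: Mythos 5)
Your overall strategy --- lift the unique measure of maximal entropy $\mu$ of $g_A$ through the inverse limit, show that the blown-up circles carry zero entropy and zero mass for any maximizing measure, and control the periodic-point count on the circles so that the empirical measures transfer --- is exactly the route the paper takes, down to the use of Theorem~\ref{thm:induced}, the injectivity of $\eta_k$ off the circle set, and the reduction of the convergence statement to $(\eta_0)_*$ of the periodic empirical measures converging to $\mu$ via Theorem~\ref{mme}.

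There is, however, one genuine gap, precisely at the point you yourself flag as ``the hard part'': you never establish that each circle $C_{k,i}$ contributes only finitely many periodic points, and the justification you sketch (``coming from the rational rotation number'') is insufficient --- a circle homeomorphism with rational rotation number may have infinitely many periodic points (the identity being the extreme case), in which case $\Per_n(G_A)$ would be infinite and $\nu_n$ not even well defined. The paper closes this by observing that every periodic point of $f_A$ is a hyperbolic saddle and that $\pi\colon\mathbb{T}^2\to\mathbb{S}^2$ is a local isometry away from the branch points, so the return map $g_k^{n_k}|_{C_{k,i}}$ on the circle of radial directions fixes exactly the four radial directions determined by the stable and unstable eigenspaces; hence the blown-up orbit $O_k$ of period $n_k$ is replaced by exactly $4n_k$ periodic points of the same period and nothing else, which gives $\Per_n(G_A|_{C_\infty})\leq 4n^2=o(\lambda^n)$. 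Once that count is in place, the rest of your argument --- $\nu_n(C_\infty)\to 0$, the identification of the limit through the level-$k$ marginals, and the uniqueness via projection --- goes through exactly as in the paper.
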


\begin{proof}
Let $\mu$ be the unique measure of maximal entropy of $g_A=g_0$ obtained in Theorem \ref{mme}. For each $k\in\N$, the map $\phi_k=\pi_k\circ \ldots \circ \pi_1\colon S_k\to \mathbb{S}^2$
satisfies $\phi_k\circ g_k=g_A\circ \phi_k$
and furthermore $\phi_k$ restricted to the set $S_k\setminus \phi_k ^{-1}(\Per(g_A))$ is one-to-one onto its image. But $\mu$ is ergodic, hence $\mu(\Per(g_A))=0$ and therefore there is a measure $\mu_k$ for $g_k$ which is isomorphic to $\mu$. Precisely, $\phi_k$ is a bi-measurable bijection from $S_k\setminus \phi_k^{-1}(Per(g_A))$ onto its image,  $(\phi_k)_*(\mu_k)=\mu$ and $\phi_k\circ g_k=g_A\circ \phi_k$.

% let $\mu_k=\pi_k^*\cdots\pi_2^*\pi_1^*\mu$, where $\pi_i^*(\nu)$ denotes the pull back measure of $\nu$ defined by $\pi_i^*(\nu)(E)=\nu(\pi_i(E))$. Observe that $\mu_k$ is an invariant measure for $g_k$. Moreover, since $\mu_k(C_{k,i})=0$ for every $i=0,...n_k$, we obtain $h_{\mu_k}(g_k)=h_{\mu}(g)$, because $\mu$ and $\mu_k$  are isomorphic through $\pi_1\circ\cdots\circ\pi_k$.

Observe that $g_k^{n_k}|_{C_{k,i}}$ is a circle homeomorphism for every $k\in\N$ and $i\in\{0,\dots,n_k-1\}$. Since every periodic point of $f_A$ is a hyperbolic saddle and the projection $\pi\colon\mathbb{T}^2\to\mathbb{S}^2$ is a local isometry outside of the branching points, it follows that $g_A^{n_k}$ can fix at most $4$ radial lines in a neighborhood of any periodic point in $O_k$, precisely their stable and unstable spaces. Consequently, $g_k$ has exactly $4$ periodic points in $C_{k,i}$. In other words, the orbit $O_k$ of period $n_k$ for $g_A$ is replaced by $4n_k$ points in $\cup_i C_{k,i}$ of period $n_k$ for $g_k$. Furthermore, $g_k$ does not have any other periodic points in that set.
It is also clear that $\mu_k(C_{k,i})=0$ for every $i$. Therefore, Theorem~\ref{thm:induced} applies and we obtain a probability Borel measure $\nu$ on $S_\infty$.
Denote 
$$
C_\infty=\{x\in S_\infty ; \exists k\in \N\textrm{ such that }  x_k=\eta_k(x)\in \cup_{i=0}^{n_k} C_{k,i} \}
$$
and $M_\infty=S_\infty \setminus C_\infty$. Note that any point in $M_\infty$ is uniquely determined by any of its coordinates, that is, points in $M_\infty$ are of the form $(y,y,y,\dots)$. In particular, $\eta_k\colon M_\infty \to S_k$ is injective for every $k\geq 0$. This and Theorem \ref{thm:induced} ensure that
$$\nu(M_{\infty})=\mu_k(\eta_k(M_{\infty})) \,\,\,\,\,\, \text{for every} \,\,\,\,\,\, k\geq 0.$$  On the other hand,  $\phi_k(\eta_k(M_{\infty}))=S^2\setminus \cup_{k\in \N} O_k$ and therefore $\mu(\phi_k(\eta_k(M_{\infty})))=1$. Thus, $1=\mu_{k}(\eta_k(M_{\infty}))=\nu(M_{\infty})$.  But this implies $\nu(C_{\infty})=0$.
%observe that $\phi_k\circ \eta_k(C_\infty)\subset \Per(g_A)$ and therefore 
%$$\mu_k(\eta_k(C_\infty))\leq \mu (\Per(g_A))=0.$$ In particular, $\nu(C_\infty)=0$.

Since $\eta_0\colon M_\infty \to \mathbb{S}^2\setminus \bigcup_{k\in\N} O_k$ is injective, $M_\infty=S_\infty \setminus C_\infty$, and $\nu(C_{\infty})=0$, it follows that $\nu$ and $\mu$ are isomorphic, and in particular have the same entropy. 
Similarly, measures supported on periodic points outside of $C_\infty$ are isomorphically transported onto measures on corresponding periodic points of $g_A$.

Observe that if $y\in \eta_k^{-1}(C_{k,i})$
then $y=(y_0,\ldots, y_{k-1},y_k,y_k,y_k,\ldots)$
and
$$
G_A(y)=(g_0(y_0),\ldots, g_{k-1}(y_{k-1}),g_k(y_k),g_k(y_k),g_k(y_k),\ldots),$$ 
that is, the dynamics of $G_A$ on $\eta_k^{-1}(\cup_{i=1}^{n_k} C_{k,i})$ is conjugated to the dynamics of $g_k$ on $\cup_{i=1}^{n_k} C_{k,i}$, which as a homeomorphism of the circle has zero topological entropy. This shows that topological entropy satisfies $h(g_A)=h(G_A)$, in particular $\nu$ is a measure of maximal entropy. Repeating the previous argument, it is clear that any ergodic measure of positive entropy must assign zero measure to $C_\infty$, and hence any measure of maximal entropy is isomorphic to $\mu$ by projection onto the first coordinate and coincides with $\nu$.

Observe that $\Per_n(G_A|_{C_\infty})\leq \sum_{i=1}^n 4n\leq 4n^2$
while by Proposition~\ref{PergA} and the construction
we have 
$$
\Per_n(G_A)\geq \Per_n(g_A)\geq \lambda^n \,\,\,\,\,\, \text{and} \,\,\,\,\,\,
|\Per_n(G_A)-\Per_n(g_A)|\leq 4n^2.
$$
This immediately implies that if we consider 
$$
\tilde{\nu}_n=\frac{1}{\Per_n(G_A)\cap M_\infty}\sum_{p\in P_n(G_A)\cap M_\infty}\delta_p,
$$
then $\tilde{\nu}_n$ converges to $\nu$ in weak* topology, because each of these measures push-forward isomorphically to $(\eta_0)_*\tilde{\nu}_n$ converging to $\mu$. It is also clear that convergence of the sequence  $\tilde{\nu}_n$ implies convergence of the sequence ${\nu}_n$ and their weak* limits are equal. This completes the proof.

% Therefore $g_k^i|_{C_{k,i}}$ has zero topological entropy. Let us denote $$C_k=\bigcup_{l=1}^k\bigcup_{i=0}^{n_k-1}C_{l,i} \textrm{ and } C_{\infty}=\Pi_{k\geq 1} C_k$$
% Thus, we obtain, for every $k\geq 1$ that $g_k|_{C_k}$ has zero topological entropy.  Therefore, any ergodic measure with positive entropy for $g_k$ gives zero measure to $C_k$. In particular, $\mu_k$ must be the unique measure of maximal entropy of $g_k$. 

% Finally, by \cite{BO} if $$M_{\infty}=\{(y_k)\in S_{\infty}; y_0\in S_0\setminus\cup_{k=1}^{\infty}O_k\}$$
% and $(y_k)\in M_{\infty}$, therefore there is $y\in S_0$ such that $g(y)=(g_0(y),g_0(y),g_0(y),...)$. %Hence, since each $\mu_k$ is isomorphic to $\mu$ we obtain that $\nu=\mu\times\mu_1\times\mu_2\cdots$ is the unique measure of maximal entropy for $g$ and this concludes the proof.
\end{proof}

\section{Further remarks and questions}

As one of the goals of our research, we tried to generalize the approach for the sphere to obtain a measure of maximal entropy in the general case of cw-hyperbolic homeomorphisms. Unfortunatelly, we were unable to adapt the proof of Bowen, even with additional hypotheses such as jointly continuous stable/unstable holonomies. The difference between expansiveness and cw-expansiveness is important to be observed. While in the expansive case the set of $n$-periodic points is finite and $(n,c)$-separated, where $c$ is an expansivity constant, in the cw-expansive case we could have even a Cantor set of periodic points that do not separate (as in the case of the identity in a Cantor set which is cw-expansive and has shadowing). This motivates the following question:

\begin{question}
Does joint continuity of stable/unstable holonomies imply  finiteness of periodic points with the same period?
\end{question}

Assuming cw$_N$-expansiveness, where there are at most $N$ distinct intersections among all pairs of local stable/unstable continua (see Definition 4 in \cite{ACCV3}), it is possible to prove finiteness of $k$-periodic points for every $k\in\N$ (this result was first obtained in \cite{ACCV3}*{Theorem 4.2}). The relation between joint continuity of holonomies and cw$_N$-expansiveness is unknown in full generality. It is proved in \cite{CR}*{Theorem 1.4} that cw$_F$-hyperbolic surface homeomorphisms with only regular byasymptotic sectors (or simply with a finite number of spines) have jointly continuous stable/unstable holonomies. The following is still an open question:

\begin{question}
Does joint continuity of stable/unstable holonomies imply cw$_N$- expansiveness for some $N\in\N$? And cw$_F$-expansiveness (see Definition 2.1 in \cite{ACS})?
\end{question}

Even the existence of a measure of maximal entropy in the general case of cw-hyperbolic homeomorphisms is still an open problem. When $f$ is cw$_N$-hyperbolic by results of \cite{ACCV3}, we can define for each $n\in\N$ the measure
$$
\mu_n=\frac{1}{\Per_n(f)}\sum_{p\in P_n(f)}\delta_p
$$
since $\Per_n(f)$ is finite for every $n\in\N$. Unfortunately, it is not easy to answer whether any measure that is a weak* limit of the sequence $(\mu_n)_{n\in\N}$ is a measure of maximal entropy, because the number of periodic points $\Per_n(f)$
can possibly grow much faster than $s_n(\delta)$, and not necessarily spread uniformly over the space. This motivates the following question.

\begin{question}
Does there exist a measure of maximal entropy for cw-hyperbolic homeomorphisms? If there is at least one, is it unique? If yes, is it obtained as the weak* limit of an average of Dirac measures on periodic orbits? Does Bowen's formula, as in Lemma \ref{formula} where topological entropy is obtained by the exponential growth of the number of $n$-periodic points, hold for cw-hyperbolic homeomorphisms? 
\end{question}

In Section 3, we answered this question affirmatively in the case of the pseudo-Anosov diffeomorphism $g_A$ of $\mathbb{S}^2$. The next question is regarding the measure of maximal entropy for $g_A$ obtained in Theorem \ref{mme}. Bowen's proof in the expansive case goes through proving that any measure accumulated by the measures $(\mu_n)_{n\in\N}$ is homogeneous (see Definition 19.6 in \cite{DGS} for a precise definition).

\begin{question}
Is the measure of maximal entropy $\mu$ obtained in Theorem \ref{mme} for $g_A$ homogeneous?
\end{question}

The last question is also related to continuity of stable/unstable holonomies and is motivated by Theorem~\ref{thm:tracing} and Remark \ref{expansiveandcw}. 

\begin{question}\label{5}
Is the constant $r$ necessary in Theorem~\ref{thm:tracing}, or in other words, does it hold with $r=1$?
\end{question}

\hspace{-0.45cm}\textbf{Acknowledgments.}
Research of Bernardo Carvalho was supported by Progetto di Eccellenza MatMod@TOV grant number CUP E83C23000330006, Prin 2022 (PRIN 2017S35EHN), and CNPq project number 446192/2024. Research of Piotr Oprocha was supported by National Science Centre, Poland (NCN), grant no. 2019/35/B/ST1/02239.

\begin{table}[h]
\begin{tabularx}{\linewidth}{p{1.5cm}  X}
\includegraphics [width=1.8cm]{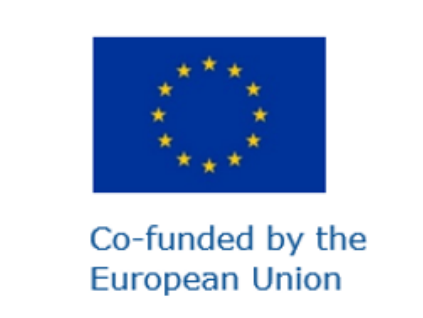} &
\vspace{-1.5cm}
This research is part of a project that has received funding from
the European Union's European Research Council Marie Sklodowska-Curie Project No. 101151716 -- TMSHADS -- HORIZON--MSCA--2023--PF--01.\\
\end{tabularx}
\end{table}

\vspace{0.4cm}
\noindent

{\em B. Carvalho}

\vspace{0.2cm}

\noindent

National Laboratory for Scientific Computing – LNCC/MCTI 

Av. Getúlio Vargas 333, CEP 25651-070, 

Petrópolis – RJ, Brazil

\vspace{0.2cm}

\email{bmcarvalho@lncc.br}

\vspace{0.2cm}

\noindent

Dipartimento di Matematica,

Università degli Studi di Roma Tor Vergata

Via Cracovia n.50 - 00133

Roma - RM, Italy
\vspace{0.2cm}

\email{mldbnr01@uniroma2.it}

\vspace{0.8cm}
\noindent

{\em P. Oprocha}
\vspace{0.2cm}

\noindent

AGH University of Krakow 

Faculty of Applied Mathematics

al. Mickiewicza 30 

30-059 Kraków, Poland

\vspace{+0.4cm}

Centre of Excellence IT4Innovations 

Institute for Research and Applications of Fuzzy Modeling 

University of Ostrava, 30. 

dubna 22, 701 03 Ostrava 1,

Czech Republic

\vspace{0.2cm}

\email{piotr.oprocha@osu.cz}

\vspace{0.8cm}
\noindent

{\em E. Rego}
\vspace{0.2cm}

AGH University of Krakow, 

Faculty of Applied Mathematics,

al. Mickiewicza 30,

30-059 Krak\'ow,

\vspace{0.2cm}

\email{rego@agh.edu.pl}

\end{document}